\documentclass[a4paper, 11pt]{article}

\oddsidemargin=0in

\evensidemargin=0in

\textwidth=6in

\headheight=0pt

\headsep=0pt

\topmargin=0in

\textheight=8.5in
\usepackage{multirow,array}
\usepackage[american]{babel}
\usepackage{amsmath, amsthm, amssymb, amsfonts}
\usepackage[round]{natbib}
\usepackage{graphicx}
\usepackage[hang, small, bf]{caption}
\setlength{\captionmargin}{20pt}
\usepackage{color}
\usepackage[usenames,dvipsnames,svgnames,table]{xcolor}
\usepackage[colorlinks=true,
            linkcolor=blue,
            urlcolor=blue,
            citecolor=blue]{hyperref}

\usepackage{hyperref}
\usepackage[utf8]{inputenc}                                    
\usepackage[T1]{fontenc}
\usepackage[all,cmtip]{xy}
\usepackage{mathrsfs}
\usepackage{yfonts}

\newtheorem{pro*}{Proposition}[section]

\newtheorem{thm*}[pro*]{Theorem}

\newtheorem{rq*}[pro*]{Remark}

\newtheorem{def*}[pro*]{Definition}

\newtheorem{lem*}[pro*]{Lemma}

\newtheorem{not*}[pro*]{Notations}

\newtheorem{con*}[pro*]{Corollary}

\newtheorem{ex*}[pro*]{Example} 

\numberwithin{equation}{section}

\def\g{\noindent}

\def\p{\vskip4truept}
\def\pp{\vskip 8truept}

\def\pg{\p\g}

\begin{document}

\title {Asymptotic value in frequency-dependent games with separable payoffs: a differential approach.} 
\author{Joseph M. Abdou \footnote{Centre d'Economie de la Sorbonne, Universit\'e Paris 1, Panth\'eon-Sorbonne, 106-112 boulevard de l'H\^opital, 75647 Paris Cedex 13-France; email: abdou@univ-paris1.fr.}, Nikolaos Pnevmatikos \footnote{Universit\'e Paris 2, Panth\'eon-Assas, 4 Rue Blaise Desgoffe, 75006, Paris, France; email: nikolaos.pnevmatikos@u-paris2.fr. This author's research was supported by Labex MME-DII. Part of this research was carried out when the author was working at GERAD of HEC Montr\'eal.}} 
\date{\today} \maketitle
\maketitle

\begin{abstract}
We study the asymptotic value of a frequency-dependent zero-sum game with separable payoff following a differential approach. The stage payoffs in such games depend on the current actions and on a linear function of the frequency of actions played so far. We associate to the repeated game, in a natural way, a differential game and although the latter presents an irregularity at the origin, we prove that it has a value.  We conclude, using appropriate approximations, that the asymptotic value of the original game exists in both the $n$-stage and the $\lambda$-discounted games and that it coincides with the value of the continuous time game.
\end{abstract}

\

\textbf{Keywords}: stochastic game, frequency-dependent payoffs, continuous-time game, discretization, Hamilton-Jacobi-Bellman-Isaacs equation. \rm

\

\textbf{JEL Classification}: C73   \textbf{AMS Classification}: 91A15 91A23 91A25

\

\section*{Introduction} \label{Section 1}

\hspace{0.4cm} Frequency-dependent games are repeated games where the stage payoffs depend on current actions and on frequency of past actions. The interpretation is that in such games, the actions undertaken by the players at each stage may generate externalities, which accumulate as the game unfolds. Stage payoffs may be frequency-dependent over time for several reasons.  For instance, payoffs may change due to learning, habit formation, addiction, or satiation. The class of frequency-dependent games covers a wide variety of applications such as littering and pollution problems, the impact of human activities on other species and more generally on the environment. Frequency-dependent games were introduced by \cite{brenner2003melioration} as a class of dynamic games with stage payoffs depending on the frequency of past actions. To the best of our knowledge, in the context of repeated games, \cite{smale1980prisoner} was the first to study  dynamics that take the past into account. The former games consist in the repetition of a one-shot game in which the stage payoff  depends on the choices of the players at the current stage, as well as on the relative frequencies of actions played at previous stages. An extensive review of this class of games and its applications can be found in \cite{joosten2003games}, where the authors focus mostly on the infinite horizon general frequency-dependent games and in particular derive several Folk-Theorem-like. Moreover, \cite{contou2011contributions} recently studied some aspects of frequency-dependent games. The main insight of this work lies in the fact that on the one hand no uniform value exists even for a one-player game and on the other hand the asymptotic value exists although its convergence is not uniform in the state variable. More precisely, the author considers a particular case of the littering game (\cite{joosten2004strategic}), in which the decision maker has two actions, one that deteriorates the environment, and the other one that preserves it; the littering action is a dominant action in each stage game, but the repeated use of this action  produces a lasting effect on the environment so that all future payoffs are decreased. The author proves in a game with a given length $n \in \mathbb{N}^*$ that the unique optimal strategy consists in using the non littering action from stage $t=1$ up to some stage $t^*(n)$ and then starting to use the littering action until the end of the play. Since the time of switching from  one  action to the other depends on the length of the game, one can prove that the uniform value does not exist in the frequency-dependent control problem.  Nevertheless, the fraction of time $t^*(n)/n$ converges when $n$ goes to infinity so that in particular the asymptotic value $\lim \textbf{V}_n(z)$  exists and is independent of the initial state $z$, although this convergence is not uniform in the state.  

\par In order to characterize the Nash equilibrium payoffs of a long game, the study of the zero-sum case seems to be necessary. In this paper, we investigate the value of a class of two-player zero-sum frequency-dependent games with finite action sets $I$ and $J$ respectively, namely the subclass of frequency-dependent games with separable payoffs\footnote{As far as we know, in general stochastic games, payoffs with separable structure first appeared in \cite{parthasarathy1984stochastic}. The authors study games with transitions independent of the current state.}. This means that the stage payoff is the sum of two parts, one part is derived from the current actions and the other one depends linearly on the frequency of the past actions. This game can be viewed as a stochastic game with countable state space, namely $\mathbb{N}^{I \times J}$ and deterministic transitions. The current state at the $n$-th stage is the \textit{aggregate past matrix}, i.e., it reflects how many times each action profile has been selected in the previous $n-1$ stages. Player 1 maximizes and Player 2 minimizes the average payoff in the first $n$ stages and the game is played under \textit{perfect-monitoring}, meaning that both players know the current state as well as the entire history, i.e., the state visited and the action pair played at each of the preceding stages. Since it is already known from the study of the one-player game that no uniform value exists, our main focus will be the existence of the asymptotic value of this game. We treat in parallel both the average and the discounted case. Note that the convergence being non-uniform in the state variable (see \cite{contou2011contributions}), we cannot rely on the Tauberian theorem of \cite{ziliotto2016tauberian} to deduce the existence of one of the limits from the existence of the other.

The traditional approach to the sequence $(\textbf{V}_n)_{n \in \mathbb{N}^*}$ is through the study of the so-called recursive equation (see \cite{mertens2015repeated}, Theorem 3.2, p.158). However, as in the case of many repeated games, it seems difficult to derive the asymptotic behavior directly from this formula. Therefore, we switch to a differential approach in the sense that we associate to the repeated game, a differential game played over $[0,1] \times \mathbb{R}^{I \times J}$. Indeed, by some heuristic reasoning it is possible to conjecture as a limit of the recursive equation, a hypothetical partial differential equation (PDE) that governs the evolution of the value. It turns out that this is precisely the Hamilton-Jacobi-Bellman-Isaacs (HJBI) equation of some differential game and furthermore that the value of this continuous game is closely related to the value of our repeated game. However, an important difficulty arises due to an irregularity of the payoff function at the origin and it is precisely at the origin where our analysis has to be done.  Everywhere but at the origin, regularity conditions are satisfied by the payoff and dynamics functions and since the Isaacs condition holds true, i.e., lower and upper Hamiltonians coincide, by \cite{evans1984diflerential}\footnote{Existence of the value follows from the standard comparison and uniqueness theorems for viscosity solutions presented in \cite{crandall1983viscosity}.} and \cite{souganidis1999two}, it follows existence of the value in the differential game. Moreover, the value is characterized as the unique viscosity solution in the space of bounded, continuous functions of the HJBI equation with a boundary condition. Despite the irregularity at the origin, we prove existence of the value in the differential game starting at $(0,0)$. In order to compare the values of the repeated game with that of the differential game, we proceed by discretization. The previously mentioned irregularity at the origin makes it impossible to apply the usual methods of approximations. We adapt the methods of \cite{souganidis1999two} for finite horizon differential games (see \cite{bardi2008optimal} for infinite horizon\footnote{The authors prove that under some regularity conditions on the payoff and dynamics functions, the discrete values converge to the values of the continuous time game as the mesh of the discretization tends to $0$. These approximations do not converge in general if the value function is discontinuous.}) so that they can fit our context. We prove that $\textbf{V}_n$ as $n$ tends to infinity, and $\textbf{V}_{\lambda}$ as $\lambda$ goes to zero, both converge to the same limit which is precisely the value of the differential game starting at the origin.
\par In the literature, the use of differential games to study the asymptotic value of a repeated game is not new. A differential approach first appeared in \cite{vieille1992weak} to study weak approachability. An approach similar to ours, has been proposed by \cite{laraki2002repeated} to prove existence of the asymptotic value in $n$-stage and $\lambda$-discounted repeated games with incomplete information on one side. \cite{cardaliaguet2012continuous} achieve a transposition to discrete time games of the numerical schemes used to approximate the value function of differential games via viscosity solution arguments, presented in \cite{barles1991convergence}. The authors prove existence of the asymptotic value in absorbing, splitting and incomplete information games, where convergence is uniform in the state variable.  Our approach differs from all these  literatures by the nature of the state space of the continuous game and chiefly in that, due to the irregularity at the origin in our setting, existence of the value in the continuous game is not straightforward. Since in our model, the state space is countable following their approach would lead us to an infinite dimensional state space in the associated differential game. As a consequence,  the way we associate the differential game to the repeated game is quite different from theirs.

\

\textbf{Structure of the paper}. The remainder of the paper is organized as follows. In Section \ref{section 2}, we give the description of a two-player frequency-dependent zero-sum game and we provide properties of the $n$-stage value function, which will be useful, in the sequel.  In Section \ref{section 3}, starting from the recursive formula satisfied by the value, we heuristically derive a PDE. Then, we define the associated differential game and prove existence of the value in the differential game played over $[0,1]$ and starting at initial state $0$. We then provide its uniformly and $\lambda$-discounted discretized versions. In Section \ref{section 5a}, we conclude by identifying the value of the continuous time game, as the limit value of the $n$-stage and the $\lambda$-discounted frequency-dependent games. In Section \ref{section 6} we conclude.

\

\section{The frequency-dependent game and preliminary results}  \label{section 2}
\pg In this section, we describe our model and we give some preliminary results.
\subsection{Definitions}

Let $I,J$ be finite sets and denote the space of real matrices with $|I|$ rows and $|J|$ columns by $\mathcal{M}^{I \times J}$. The notation $(e_{ij})_{ij}$ stands for the canonical basis in $\mathbb{R}^{I \times J}$, i.e., $e_{ij}=(\delta_i^{i'}\delta_j^{j'})_{i'j'}$, where $\delta_k^{k'}, k \in \{i,j\}$ denotes the Kronecker delta. Let $A=[a_{ij}]$ and $H$  be two elements of $\mathcal{M}^{I \times J}$ and let $z_0 \in \mathcal{Z} :=\mathbb{N}^{I \times J}$. A frequency-dependent zero-sum repeated game with initial state $z_0$ is a dynamic game played by steps as follows:

\pg  At stage $t=1,2,...$, Player 1 and 2 simultaneously and independently choose an action in their own set of actions, $i_t \in I$ and $j_t \in J$ respectively. The stage payoff of Player 1 is given by:
\begin{align*} 
g_t:=g(z_{t-1},i_t,j_t)=a_{i_tj_t} + h(z_{t-1}),
\end{align*}
where $z_t=z_0+e_{i_1j_1}+...+e_{i_{t}j_{t}}$ and for any $z \in \mathcal{Z}$,
\begin{align*}
h({z}):=\begin{cases}\left\langle H,\frac{z}{|z|}\right\rangle, \hspace{0.26cm} z \neq 0\\ 
0, \hspace{1.5cm} z=0,
\end{cases}
\end{align*}
where $\left\langle \cdot, \cdot \right\rangle$ denotes the canonical inner product in $\mathbb{R}^{I \times J}$ and $| \cdot |$ stands for $\left\| \cdot \right\|_1$. The payoff of Player 2 is the opposite of that of Player 1. We assume perfect monitoring of past actions by both players. 

\

\g\textbf{Notation.} In the sequel, we use the notations: $\mathbb{N}^*=\mathbb{N} \setminus \{0\}$ and $\mathbb{R}_+^*=\mathbb{R}_+ \setminus \{0\}$.

\

\subsection{The values of $\Gamma_N(z_0)$ and $\Gamma_{\lambda}(z_0)$}

\hspace{0.4cm}Given $z_0\in \mathcal{Z}$, for any $N \in \mathbb{N}^*$ and any $\lambda \in (0,1)$, we will be interested in the finite $N$-stage and $\lambda$-discounted games of initial state $z_0$, denoted by $\Gamma_N(z_0)$ and $\Gamma_{\lambda}(z_0)$ respectively. A play is given by $\omega:=(i_t,j_t)_{t \in \mathbb{N}^*}$ and the induced rewards in the game of initial state $z_0$, is $\gamma_N(z_0,\omega)=(1/N)\sum_{t=1}^Ng_t$, (resp. $\gamma_{\lambda}(z_0,\omega)=\sum_{t=1}^{\infty}\lambda(1-\lambda)^{t-1}g_t$). Note that, due to the nature of the transition in the state space, announcing the selected moves publicly also reveals the state variable to the players. Therefore, we will denote by $\mathbf{H}_t=\mathcal{Z} \times (I \times J)^{t-1}$ the set of histories at stage $t$ and $\mathbf{H}=\cup_{t \geq 0}\mathbf{H}_t$ will denote the set of all histories. $\Delta(I)$ and $\Delta(J)$ are the sets of mixed moves of Player 1 and Player 2 respectively. A behavioral strategy for Player 1 is a family of maps $\sigma=(\sigma_t)_{t \geq 1}$, such that $\sigma_t: \mathbf{H}_{t} \rightarrow \Delta(I)$. Similarly, a behavioral strategy for Player 2 is a family of maps $\tau=(\tau_t)_{t \geq 1}$, where $\tau_t:\mathbf{H}_{t} \rightarrow \Delta(J)$. $\Sigma$ and $T$ denote the sets of behavioral strategies of Player 1 and Player 2, respectively. Given $z_0 \in \mathcal{Z}$, each strategy profile $(\sigma,\tau)$ induces a unique probability distribution $\mathbb{P}^{z_0}_{\sigma,\tau}$ on the set $\mathcal{Z} \times (I \times J)^{\infty}$ of plays (endowed with the $\sigma$-field generated by the cylinders). $\mathbb{E}^{z_0}_{\sigma,\tau}$ stands for the corresponding expectation. 
\pg We study the games $\Gamma_N(z_0)$ and $\Gamma_{\lambda}(z_0)$ in which the payoff of Player 1 is given by $\gamma_N(z_0,\sigma,\tau)=\mathbb{E}^{z_0}_{\sigma,\tau}\big(\frac{1}{N}\sum_{t=1}^{N}g_t\big)$ and $\gamma_{\lambda}(z_0,\sigma,\tau)=\mathbb{E}^{z_0}_{\sigma,\tau}\left(\sum_{t=1}^{\infty}\lambda(1-\lambda)^{t-1}g_t\right)$ respectively. Existence of the value in $\Gamma_N(z)$ and $\Gamma_{\lambda}(z)$ in mixed strategies follows from the minmax theorem of \cite{neumann1928theorie} and since the game is played under perfect-recall by Kuhn's theorem the value can be achieved by using behavioral strategies. The $N$-stage and $\lambda$-discounted values are given by $\textbf{V}_{N}(z_0) = \sup_{\sigma\in {\Sigma}} \inf_{\tau  \in T} \gamma_N(z_0,\sigma,\tau)$ and $\textbf{V}_{\lambda} (z_0) = \sup_{\sigma\in {\Sigma}} \inf_{\tau  \in T} \gamma_{\lambda}(z_0,\sigma,\tau)$ respectively. By \cite{mertens2015repeated} (Theorem 3.2, p.158) given $(n,\lambda) \in \mathbb{N}^* \times (0,1]$ and a state $z \in \mathcal{Z}$, $\textbf{V}_n$ and $\textbf{V}_{\lambda}$ satisfy the following recursive formulas:
\begin{align}\label{e1}
(n+1) \textbf{V}_{n+1}(z)&= h(z)  + \max\limits_{u \in \Delta(I)} \min\limits_{v \in \Delta(J)} \bigg( \sum\limits_{i,j} u_i v_j \big( a_{ij} + n \textbf{V}_{n}(z + e_{ij}) \big)  \bigg)
\end{align}
\begin{align} \label{eq1'}
{{\bf{V}}_{\lambda}}\left( z \right) &=  \lambda h(z)  + \mathop {\max }\limits_{u \in \Delta(I)} \mathop {\min }\limits_{v \in \Delta(J)}  \bigg(\sum\limits_{i,j}u_iv_j\left( {\lambda{a_{ij}} + (1-\lambda) {{\bf{V}}_{{\lambda}}}\left( {z + {e_{ij}}} \right)} \right) \bigg)
\end{align}

In the remainder of this section, given $n \in \mathbb{N}^*$, we provide a formula for the value of the $n$-stage game. This formula is too complex to allow the study of the limit, nevertheless it sheds a light on the asymptotic behavior of the value.  
\pp

\g\textbf{Notation.} We use the following notations:
\begin{itemize}
\item Given $t \in \mathbb{N}$, let $\Pi_{t}$ denote the subset of the state space $\mathcal{Z}$ defined as follows:
\begin{align*}
\Pi_{t}=\{z \in \mathcal{Z} : |z| = t \}.
\end{align*}

\item We denote the $\max\min$ operator by $\textbf{val}$.

\item For any $(a, p) \in \mathbb{R}^*_+ \times \mathbb{N}^*$, we put :
\begin{align*}
\Lambda_p(a):=\frac{1}{a}+\frac{1}{a+1}...+\frac{1}{a+p-1}=\sum\limits_{k=0}^{p-1}\frac{1}{a+k}.
\end{align*}
\end{itemize}

\

\begin{pro*} \label{pro affineproperty}
For all $n \in  \mathbb{N}^*$, for all $t \in \mathbb{N}^*$, let $K_{n,t} \in \mathcal{M}^{I \times J}$ and $C_{n,t} \in \mathbb{R}$, such that $K_{n,t}=\Lambda_{n}(t) H $ and $C_{n,t}=\sum_{k=1}^{n-1}{\bf val}\big(A+K_{n-k,t+k}\big)$. Then, for all $z \in \Pi_{t}$
\[n\textbf{V}_n(z)=\left\langle K_{n,t}, z \right\rangle + C_{n,t}\]
\end{pro*}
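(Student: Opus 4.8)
The plan is to prove the identity by induction on $n \in \mathbb{N}^{*}$, keeping $t \in \mathbb{N}^{*}$ free so that at each step the statement is quantified over all slices $\Pi_{t}$ at once; the state $z$ enters only through the slice containing it. The point that makes this work is that away from the origin $h$ is linear on each slice: for $t \geq 1$ and $z \in \Pi_{t}$ one has $h(z) = \langle H, z/|z|\rangle = \langle \tfrac{1}{t}H, z\rangle$, so the singularity of $h$ at $0$ never intervenes when $t \geq 1$, and $\textbf{V}_n$ turns out to be affine on $\Pi_{t}$. I would also use repeatedly the elementary fact that $\textbf{val}(M + c\,\mathbf{1}) = \textbf{val}(M) + c$ for a real matrix $M$ and a scalar $c$ (here $\mathbf{1}$ is the all-ones matrix, so $c\mathbf{1}$ shifts every entry by $c$), together with the telescoping relation $\tfrac{1}{t} + \Lambda_n(t+1) = \Lambda_{n+1}(t)$, which is immediate from the definition of $\Lambda$.

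For the base case $n=1$: the one-stage game at $z \in \Pi_{t}$ has payoff matrix $[a_{ij} + h(z)]_{ij}$, hence $\textbf{V}_1(z) = \textbf{val}(A) + h(z) = \langle \tfrac{1}{t}H, z\rangle + \textbf{val}(A) = \langle K_{1,t}, z\rangle + \textbf{val}(A)$, which is the claimed identity with $C_{1,t} = \textbf{val}(A)$ (read, with the convention $K_{0,s} := \Lambda_0(s)H = 0$, as $\textbf{val}(A+K_{0,s})$). For the inductive step, assume the formula at level $n$ for every slice, and fix $t \in \mathbb{N}^{*}$ and $z \in \Pi_{t}$. For each pair $(i,j)$ we have $z + e_{ij} \in \Pi_{t+1}$, so the induction hypothesis at level $n$, slice $t+1$, gives $n\textbf{V}_n(z+e_{ij}) = \langle K_{n,t+1}, z+e_{ij}\rangle + C_{n,t+1} = [K_{n,t+1}]_{ij} + \big(\langle K_{n,t+1}, z\rangle + C_{n,t+1}\big)$, where the bracketed quantity is constant in $(i,j)$. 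Substituting into \eqref{e1} and pushing that constant through $\textbf{val}$ gives
\[
(n+1)\textbf{V}_{n+1}(z) = h(z) + \langle K_{n,t+1}, z\rangle + C_{n,t+1} + \textbf{val}\big(A + K_{n,t+1}\big).
\]
Writing $h(z) = \langle \tfrac{1}{t}H, z\rangle$ and using $\tfrac{1}{t}H + K_{n,t+1} = \Lambda_{n+1}(t)H = K_{n+1,t}$, the right-hand side becomes $\langle K_{n+1,t}, z\rangle + C_{n+1,t}$ with $C_{n+1,t} := C_{n,t+1} + \textbf{val}(A + K_{n,t+1})$, which closes the induction on the affine form.

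It then remains only to unroll this one-step recursion for the constants, started from $C_{1,\cdot} = \textbf{val}(A)$: iterating $C_{m+1,t} = C_{m,t+1} + \textbf{val}(A + K_{m,t+1})$ down to $m=1$, the reindexing $k \mapsto k+1$ turns the sum inside $C_{m,t+1}$ into the terms $\textbf{val}(A + K_{m+1-k,\,t+k})$ for $k = 2,\dots,m$, while the freshly added term supplies $k=1$; carried all the way down this yields $C_{n,t} = \textbf{val}(A) + \sum_{k=1}^{n-1}\textbf{val}(A + K_{n-k,t+k})$, i.e. the displayed expression (the leading $\textbf{val}(A)$ being the $k=n$ term once one reads $K_{0,\cdot}=0$). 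I do not expect a genuine obstacle: the whole argument is a bookkeeping induction off the recursive formula \eqref{e1}, and the only delicate points are keeping the two indices straight — the number of remaining stages decreasing from $n+1$ to $n$ while the mass of the state grows from $t$ to $t+1$ — and invoking $\Lambda_{n+1}(t) = \tfrac{1}{t} + \Lambda_n(t+1)$ so that the linear coefficients accumulate correctly. The one conceptually substantive point is that restricting to $t \geq 1$ is precisely what keeps the computation away from the origin, so that $h$ stays linear throughout and $\textbf{V}_n$ remains affine on every $\Pi_{t}$.
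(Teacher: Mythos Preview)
Your proof is correct and follows essentially the same route as the paper's: induction on $n$ over all slices $\Pi_t$ simultaneously, using the recursive formula \eqref{e1}, the linearity of $h$ on $\Pi_t$ for $t\ge 1$, and the telescoping identity $\Lambda_{n+1}(t)=\tfrac{1}{t}+\Lambda_n(t+1)$ to close the recursion $K_{n+1,t}=\tfrac{1}{t}H+K_{n,t+1}$ and $C_{n+1,t}=C_{n,t+1}+\textbf{val}(A+K_{n,t+1})$. Your unrolling of the constant is in fact a touch more careful than the paper's: you correctly obtain $C_{n,t}=\textbf{val}(A)+\sum_{k=1}^{n-1}\textbf{val}(A+K_{n-k,t+k})$, which is the formula the paper's own recursion produces (the displayed $\sum_{k=1}^{n-1}$ in the statement is off by the base term $\textbf{val}(A)$, as one sees already at $n=1$).
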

\begin{proof}We proceed by induction on the variable $n$:
\pg For $n=1$, for any $t \in \mathbb{N}^*$ and any $z \in \Pi_t$:
\begin{align*}
\textbf{V}_1(z) =  \left\langle H,\frac{z}{|z|} \right\rangle  + \max\limits_{u \in \Delta(I)} \min\limits_{v \in \Delta(J)}
\left( \sum\limits_{ij} u_i v_j a_{ij} \right).
\end{align*}
Then, $\textbf{V}_1(z)=\left\langle K_{1,t},z \right\rangle + C_{1,t}$, where $K_{1,t}=\frac{H}{t}$ and $C_{1,t}=\textbf{val}(A)$.

\pg The recursive formula  (\ref{e1}) and the induction hypothesis for $n=m$  implies, for all $z \in \Pi_{t}$:
\begin{align*} 
(m + 1)\textbf{V}_{m + 1}(z) 
&= \left\langle \frac{H}{t} + K_{m,t+1},z \right\rangle  + \max\limits_{u \in \Delta(I)}\min\limits_{v \in \Delta(J)}\left( \sum\limits_{ij}u_iv_j\bigg(a_{ij} +  \left\langle K_{m,t+1},e_{ij} \right\rangle + C_{m,t+1}\bigg) \right)
\\
&= \left\langle \frac{H}{t} + K_{m,t+1},z \right\rangle  + \textbf{val}\bigg(A +  K_{m,t+1} \bigg)+ C_{m,t+1}, 
\end{align*}
the middle equality folows from the inner product properties and the fact that $\sum_{ij}u_iv_j=1$, and the last one from the $\textbf{val}$ operator properties. Hence,
\begin{align*}
(m+1)V_{m+1}(z)= \left\langle K_{m+1,t}, z \right\rangle + C_{m+1,t},
\end{align*}
where $K_{m+1,t}=\frac{H}{t} +K_{m,t+1}$ and $C_{m+1,t}=\textbf{val}\big(A+K_{m,t+1}\big) + C_{m,t+1}$, This concludes the proof of the assumption. The rest is routine algebra. Note that $\Lambda_{m+1}(t)=\Lambda_m(t+1)+\frac{1}{t}$.
\end{proof}

\

\begin{con*} \label{con conv of k}
Let $\rho \in (0,1)$. If $N \rightarrow +\infty$ and $\frac{n}{N} \rightarrow \rho$, then
$\lim K_{n,N-n} = -H \ln(1-\rho)$.

\end{con*}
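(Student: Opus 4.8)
The plan is to reduce the statement to an elementary fact about partial sums of the harmonic series, since the matrix conclusion is immediate once the scalar behaviour is understood. By the notation fixed in Proposition~\ref{pro affineproperty}, $K_{n,N-n}=\Lambda_n(N-n)\,H$ with $H$ a fixed matrix, so it suffices to show $\Lambda_n(N-n)\to-\ln(1-\rho)$. I would first record that, because $\rho<1$, for all $N$ large enough one has $n<N$, so $N-n\in\mathbb{N}^*$ and $\Lambda_n(N-n)$ is well-defined, and moreover $N-n\to+\infty$. Reindexing by $j=N-n+k$ in the definition of $\Lambda$,
\[
\Lambda_n(N-n)=\sum_{k=0}^{n-1}\frac{1}{N-n+k}=\sum_{j=N-n}^{N-1}\frac1j .
\]

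The heart of the argument is to compare this sum with $\int\!\frac{dx}{x}$. Monotonicity of $x\mapsto 1/x$ on $(0,\infty)$ gives, for $N$ large enough that $N-n-1\geq 1$,
\[
\ln\frac{N}{N-n}=\int_{N-n}^{N}\frac{dx}{x}\ \leq\ \sum_{j=N-n}^{N-1}\frac1j\ \leq\ \int_{N-n-1}^{N-1}\frac{dx}{x}=\ln\frac{N-1}{N-n-1}.
\]
Dividing numerator and denominator by $N$, both bounds are continuous in the ratio $n/N$: one has $\frac{N}{N-n}=\frac{1}{1-n/N}\to\frac{1}{1-\rho}$ and $\frac{N-1}{N-n-1}=\frac{1-1/N}{1-n/N-1/N}\to\frac{1}{1-\rho}$ as $N\to\infty$ with $n/N\to\rho$. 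By the squeeze theorem, $\Lambda_n(N-n)\to\ln\frac{1}{1-\rho}=-\ln(1-\rho)$; alternatively one may invoke $\sum_{j=1}^{m}1/j=\ln m+\gamma+o(1)$ and write $\Lambda_n(N-n)=\ln(N-1)-\ln(N-n-1)+o(1)$. Multiplying by the constant matrix $H$ then yields $K_{n,N-n}\to-H\ln(1-\rho)$, as claimed.

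I do not expect a genuine obstacle here; the computation is routine. The only points requiring a little care are that the strict inequality $\rho<1$ is used twice — once to ensure $\Lambda_n(N-n)$ is defined for large $N$, and once to guarantee $N-n$ and $N-n-1$ diverge so the integral comparison applies — and that the hypothesis is $n/N\to\rho$ rather than an equality, which is harmless since the upper and lower bounds above depend continuously on $n/N$.
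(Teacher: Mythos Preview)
Your proof is correct and follows essentially the same route as the paper: reduce to the scalar $\Lambda_n(N-n)=\sum_{j=N-n}^{N-1}\frac{1}{j}$ and use the logarithmic asymptotics of partial harmonic sums. The paper phrases this via the Euler constant expansion $\sum_{k=1}^m \frac{1}{k}=\ln m+\gamma+o(1)$, which you also mention as an alternative to your integral-comparison squeeze.
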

\begin{proof}  Writing $\Lambda_n( N-n) = \sum_{k=1}^{N-1} \frac{1}{k} - \sum_{k=1}^{N-n-1} \frac{1}{k}$, the limit  follows readily from the fact that  the sequence $\sum_{k=1}^n \frac{1}{k}- \ln n$  converges to the Euler constant $\gamma$ when $n$ goes to infinity.
\end{proof}

\

\section{A differential approach} \label{section 3}

\hspace{0.4cm} Given the current stage of the game of total length $N$, the number of stages until the end of the game is denoted by $n$ and in view of Corollary \ref{con conv of k}, the relevant asymptotic filter for the convergence is not that of $N \rightarrow +\infty$ but the one of $\frac{n}{N} \rightarrow \rho \in (0,1)$, where $\rho$ is the fraction of the remaining game. Hence, it seems natural to introduce a continuous version of the dynamic game. To begin with, moving from the recursive formula obtained in (\ref{e1}), we heuristically derive a PDE (Section \ref{section 3.1}). It turns out  that the latter is precisely the HJBI equation of some differential game that we shall define. 

\

\g For any $N \in \mathbb{N}^*$, we define the quotient state space and the uniform partition of $[0,1]$:
\[ \mathcal{Q}_N:=\bigg\{q : q=\frac{z}{N}, z \in \mathcal{Z} \bigg\}, \hspace{2cm} \mathcal{I}_N:=\bigg\{0,\frac{1}{N},...,1\bigg\}.\] 

\

\subsection{The heuristic PDE and the differential game $\mathcal{G}(t,q)$} \label{section 3.1}

We define the function $\Psi_N:\mathcal{I}_N \times \mathcal{Q}_N \rightarrow \mathbb{R}$, such that 
\begin{align} \label{e13}
\Psi_N(t,q):=(1-t)\textbf{V}_n(z),
\end{align}  
where $z=Nq$ and $n=N(1-t)$. Then, $\Psi_N$ satisfies for any $q \in \mathcal{Q}$,
\begin{align} \label{e14} 
\begin{cases}
\Psi_N(t,q)= \frac{h(q)}{N}+ \max\limits_{u \in \Delta(I)} \min\limits_{v \in \Delta(J)} \left(\sum\limits_{i,j} u_iv_j\left(\frac{a_{ij}}{N} +\Psi_N\left(t+\frac{1}{N},q+\frac{e_{ij}}{N}\right) \right)\right), \hspace{0.3cm} t \in \mathcal{I}_N \setminus \{1\}\\
\Psi_N(1,q)=0.
\end{cases}
 \end{align}
The first formula of (\ref{e14}), for any $t \in \mathcal{I}_N\setminus\{1\}$  can be written equivalently:
\begin{align}\label{e15}
0= h(q) + \max\limits_{u \in \Delta(I)} \min\limits_{v \in \Delta(J)} \left(\sum\limits_{i,j} u_iv_j\left(a_{ij} +  N \bigg(\Psi_N\left(t+\frac{1}{N},q+\frac{e_{ij}}{N}\right)-\Psi_N(t,q) \bigg)\right)\right).
\end{align}
When $N \rightarrow + \infty$, we heuristically assume that there exists a sufficiently differentiable function $\Psi: [0,1] \times \mathbb{R}^{I \times J}_{+}\setminus \{0\} \rightarrow \mathbb{R}$ (the limit of $\Psi_N$), which will therefore  satisfy the following (PDE) with boundary condition of (\ref{e14}):
\begin{align} \label{e16}
\begin{cases}
\frac{\partial{\Psi}}{\partial{t}}(t,q) + h(q) + \max\limits_{u \in \Delta(I)}\min\limits_{v \in \Delta(J)}\sum\limits_{i,j}u_iv_j\left(a_{ij} + \frac{\partial{\Psi}}{\partial{q_{ij}}}(t,q)\right)=0, \hspace{0.5cm} (t,q) \in [0,1)\times\mathbb{R}^{I \times J}_{+}\setminus \{0\} ,\\
\Psi(1,q) =  0, \hspace{1cm} q \in \mathbb{R}^{I \times J}_{+}\setminus \{0\} .
\end{cases}
\end{align}

\

\textbf{The differential game}. Given $(t,q) \in [0,1] \times \mathbb{R}^{I \times J}_{+}$, we define a differential zero-sum game, denoted by $\mathcal{G}(t,q)$ starting at time $t$ with initial state $q$. It consists of:
\begin{itemize}
\item The state space $\mathcal{Q}=\mathbb{R}^{I \times J}_{+}$.
\item The time interval of the game $T=[t,1]$.
\item Player 1 uses a measurable control $\tilde{u}:[t, 1] \rightarrow \Delta(I)$ and his control space is $\mathcal{U}_{t}$. Player 2 uses a measurable control $\tilde{v}:[t,1] \rightarrow \Delta(J)$ and his control space is $\mathcal{V}_{t}$. For $t=0$, we use the notation $\mathcal{U}:=\mathcal{U}_0$ and $\mathcal{V}:=\mathcal{V}_0$.
\item If Player 1 uses $\tilde{u}$ and Player 2 uses $\tilde{v}$, then the dynamics in the state space is defined as follows:
\begin{align}\label{ordinary diff}
\begin{cases}\frac{dq}{dt}(s) = \tilde{u}(s) \otimes \tilde{v}(s),\hspace{1cm}s\in(t,1),\\
q(t) = q.\\
\end{cases} 
\end{align}
Clearly, the dynamics is driven by a bounded, continuous function, which is Lipschitz in $q$ and thus (\ref{ordinary diff}) admits a unique solution. 
\item The running payoff at time $s \in [t,1]$ that Player 1 receives from Player 2 is given by $g:\mathcal{Q} \times \Delta(I) \times \Delta(J) \rightarrow \mathbb{R}$ and defined as: 
\begin{align} \label{run payoff}
g(q,u,v) =  h(q) + \big\langle u \otimes v,A \big\rangle
\end{align}
where,
\begin{align*}
h({q}):=\begin{cases}\left\langle H,\frac{q}{|q|}\right\rangle, \hspace{0.25cm} q \neq 0\\ 
0, \hspace{1.49cm} q=0.
\end{cases}
\end{align*}
It is easy to see that $g$ is bounded by $\left\|H\right\|_{\infty} + \left\|A\right\|_{\infty}$ and since $q:[t,1] \rightarrow \mathcal{Q}$ is a differentiable function of time (see (\ref{ordinary diff})), $g$ is differentiable on $\mathcal{Q} \setminus \{0\}$. 

\item The payoff associated to the pair of controls $(\tilde{u},\tilde{v}) \in \mathcal{U}_{t} \times \mathcal{V}_{t}$ that Player 2 pays to Player 1 at time $1$ is given by:
\begin{align}\label{e19}
G(t,q,\tilde{u},\tilde{v}) = \int\limits_{t}^1 g(q(s),\tilde{u}(s),\tilde{v}(s))ds. 
\end{align}
\end{itemize}

\

Following \cite{varaiya1967existence}, \cite{roxin1969axiomatic} and \cite{elliott1972values}, we allow the players to update their controls using non-anticipative strategies. A non-anticipative strategy for Player 1 is a map $\alpha: \mathcal{V}_{t} \rightarrow \mathcal{U}_{t}$ such that for any time $\tilde{t} > t$, 
\begin{align*} 
\tilde{v}_1(s) = \tilde{v}_2(s)\hspace{1cm}\forall s \in [t,\tilde{t}]\hspace{0.5cm}\Rightarrow \hspace{0.5cm}\alpha[\tilde{v}_1(s)] = \alpha[\tilde{v}_2(s)]\hspace{0.5cm}\forall s \in [t,\tilde{t}]. 
\end{align*}
The definition of non-anticipative strategies for Player 2 is analogous. Denote by $\mathcal{A}_{t}$ and $\mathcal{B}_{t}$ the sets of non-anticipative strategies of the players respectively and let us put $\mathcal{A}:=\mathcal{A}_0$ and $\mathcal{B}:=\mathcal{B}_0$. With respect to this notion of strategies, the lower and upper values are defined as follows:
\begin{align*}
W^-(t,q)&: = \sup\limits_{\alpha  \in \mathcal{A}_{t}} \inf\limits_{\tilde{v} \in \mathcal{V}_{t}} G(t,q,\alpha[\tilde{v}],\tilde{v}) \\
W^+(t,q)&: = \inf\limits_{\beta  \in \mathcal{B}_{t}} \sup\limits_{\tilde{u} \in \mathcal{U}_{t}} G(t,q,\tilde{u},\beta[\tilde{u}]). 
\end{align*}
When both functions coincide, we say that the game $\mathcal{G}(t,q)$ has a \textit{value}, denoted by $W(t,q)$.

\

Following \cite{cardaliaguet2000introductiona} and \cite{bardi2008optimal}, the lower and upper hamiltonian functions of the game $\mathcal{G}(t,q)$,
$\mathcal{H}^{\pm}:\mathcal{Q} \times \mathcal{Q}  \rightarrow \mathbb{R}$ are given by:
\begin{align} \label{e20} 
\mathcal{H}^-(\xi,q) &= h(q) + \max\limits_{u \in \Delta(I)} \min\limits_{v \in \Delta(J)}  \big\langle u \otimes v,A + \xi \big\rangle \nonumber
\\
{}
\\
\mathcal{H}^+(\xi,q) &= h(q) + \min\limits_{v \in \Delta(J)} \max\limits_{u \in \Delta(I)}  \big\langle u \otimes v,A + \xi \big\rangle. \nonumber
\end{align}

\

\g \textbf{Notation}. In the sequel, $\mathcal{Q}^*$ stands for $\mathcal{Q} \setminus \{0\}$.

\

Given $(t,q) \in [0,1] \times \mathcal{Q}^*$, we have: (i) $\Delta(I)$ and $\Delta(J)$ are compact sets; (ii) the dynamics in the state space (\ref{ordinary diff}) and the running payoff (\ref{run payoff}) are bounded, continuous in all their variables and Lipschitz in the state variable $q$ functions; (iii) from the minmax theorem in \cite{neumann1928theorie}, it clearly follows that the Isaacs condition, i.e., $\mathcal{H}^-=\mathcal{H}^+$ holds true (see (\ref{e20})). Then, by \cite{evans1984diflerential} and \cite{souganidis1999two}, the differential game $\mathcal{G}(t,q)$ starting at time $t \in [0,1)$ with initial state $q \in \mathcal{Q}^*$ admits a value, denoted by $W(t,q)$. Moreover, the authors characterize\footnote{In the literature, the lower and upper values of a differential game have been first characterized by means of DPP in \cite{elliott1974cauchy}.} the value by means of the Dynamic Programming Principle (DPP). Namely, for all $(t,q) \in [0,1) \times \mathcal{Q}^*$ and all $\delta \in (0,1-t]$, we have:
\begin{align} \label{e21}
W( t,q) = \sup\limits_{\alpha \in \mathcal{A}_t} \inf\limits_{\tilde{v} \in \mathcal{V}_t} \left\{\left\langle H,\int\limits_{t}^{t + \delta} \frac{q(s)}{|q|+s}ds \right\rangle +  \left\langle \int\limits_{t}^{t + \delta} \alpha[\tilde{v}(s)] \otimes \tilde{v}(s)ds ,A \right\rangle  + W^* \right\}, 
\end{align}
where $W^*:= W\big(t + \delta,q(t + \delta)\big)$ with $q(t + \delta) = q + \int\limits_{t}^{t + \delta} \alpha[\tilde{v}(s)] \otimes \tilde{v}( s)ds$. 

\

Furthermore under the preceding assumptions, $W(t,q)$ is the unique solution in the space of real-valued, bounded, continuous functions defined over $[0,1] \times \mathcal{Q}^*$ of the following HJBI equation:
\begin{align} \label{e22}
\begin{cases}\frac{\partial{W}}{\partial{t}}(t,q)+\mathcal{H}\big({\nabla_q} W(t,q),q\big) = 0,\hspace{0.6cm} (t,q) \in [0,1) \times \mathcal{Q}^*,\\
W(1,q)=0, \hspace{0.2cm} q \in \mathcal{Q}^*.
\end{cases}
\end{align}
where, $\mathcal{H}:=\mathcal{H}^-=\mathcal{H}^+$ is the hamiltonian defined earlier. Consequently, one can identify the PDE obtained in (\ref{e16}) with the HJBI equations of (\ref{e22}).

\

\subsection{Existence of the value in $\mathcal{G}(0,0)$} \label{section 4}

\hspace{0.4cm}In this section, we extend the results of the differential game $\mathcal{G}(t,q)$ over the set $[0,1]\times \mathcal{Q}$. Precisely, we show that $W(0,q)$ admits a limit as $q$ tends to $0$ and we further establish that such limit is the value of the game starting at $(0,0)$, which therefore exists. The idea of the proof lies in the consideration of $\frac{\varepsilon}{2}$-optimal strategies in $\mathcal{G}(0,q)$ that are $\varepsilon$-optimal in $\mathcal{G}(0,0)$.

\

\begin{lem*} \label{lem maj}
Let $q \in \mathcal{Q}^*$ and $(\tilde{u},\tilde{v}) \in \mathcal{U} \times \mathcal{V}$. Denote by $q(\cdot)$ and $\tilde{q}(\cdot)$ the trajectories with initial conditions $q(0)=q$ and $\tilde{q}(0)=0$ obtained from (\ref{ordinary diff}). Then, for any $s \in [0,1]$, we have: $\left| h\left(q(s)\right)-h\left(\tilde{q}(s)\right)\right| \leq 2 \left\|H\right\|_{\infty} \hspace{0.05cm} \frac{\left|q\right|}{\left|q(s)\right|}$.
\end{lem*}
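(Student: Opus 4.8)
The plan is to exploit the fact that both trajectories obey the same velocity law. Writing out the solutions of (\ref{ordinary diff}) gives $q(s) = q + \int_{0}^{s}\tilde{u}(r)\otimes\tilde{v}(r)\,dr$ and $\tilde{q}(s) = \int_{0}^{s}\tilde{u}(r)\otimes\tilde{v}(r)\,dr$. For each $r$ the matrix $\tilde{u}(r)\otimes\tilde{v}(r)$ has nonnegative entries with total mass $|\tilde{u}(r)\otimes\tilde{v}(r)| = 1$, hence $q(s)-\tilde{q}(s) = q$ for every $s$, $|q(s)| = |q| + s$, and $|\tilde{q}(s)| = s$. In particular $q(s)\neq 0$ for all $s\in[0,1]$ (since $q\neq 0$) and $\tilde{q}(s)\neq 0$ for all $s\in(0,1]$, so along both trajectories $h$ is given by its first branch, except at $s=0$ where $h(\tilde{q}(0))=0$.

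For $s\in(0,1]$, set $a:=|q(s)| = |q| + s$ and $b:=|q| = a - |\tilde{q}(s)|$. Using $q(s) = q + \tilde{q}(s)$ one computes
\[ \frac{q(s)}{|q(s)|} - \frac{\tilde{q}(s)}{|\tilde{q}(s)|} = \frac{q(s)}{a} - \frac{\tilde{q}(s)}{a-b} = \frac{q}{a} - \frac{b}{a}\cdot\frac{\tilde{q}(s)}{a-b}. \]
Taking $|\cdot|$-norms and using $|q| = b$, $|\tilde{q}(s)| = a-b$, the two terms each contribute $b/a$, so by the triangle inequality
\[ \left| \frac{q(s)}{|q(s)|} - \frac{\tilde{q}(s)}{|\tilde{q}(s)|} \right| \leq \frac{2b}{a} = \frac{2|q|}{|q(s)|}. \]
Applying the estimate $|\langle H,x\rangle| \leq \|H\|_{\infty}\,|x|$ with $x$ equal to this normalized difference then yields $|h(q(s)) - h(\tilde{q}(s))| \leq 2\|H\|_{\infty}\,|q|/|q(s)|$, which is the assertion for $s>0$.

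The case $s=0$ is immediate: $h(\tilde{q}(0)) = 0$ and $|h(q(0))| = |\langle H,\, q/|q|\,\rangle| \leq \|H\|_{\infty} \leq 2\|H\|_{\infty}\,|q|/|q(0)|$ since $|q(0)| = |q|$; this settles all $s\in[0,1]$. I do not expect a genuine obstacle here: the argument is elementary once the identity $q(s)-\tilde{q}(s)=q$ and the mass formulas $|q(s)| = |q|+s$, $|\tilde{q}(s)| = s$ are available. The only point requiring a little care is arranging the decomposition of the normalized difference so that a factor $b/a$ is pulled out in front of the $\tilde{q}(s)$-term; a cruder splitting would give a bound of order $|q|/|\tilde{q}(s)|$ instead of the sharper $|q|/|q(s)|$ that is needed in the sequel.
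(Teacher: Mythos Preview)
Your argument is correct and follows essentially the same route as the paper's proof: both treat the degenerate case $\tilde q(s)=0$ (i.e.\ $s=0$) separately, then bound the difference of normalized vectors by $2|q|/|q(s)|$ and pair with $H$. The only cosmetic difference is that the paper invokes the general inequality $\bigl|\,x/|x|-y/|y|\,\bigr|\le 2|x-y|/|x|$ directly, whereas you unwind the same bound via an explicit decomposition using $q(s)=q+\tilde q(s)$; the content is identical.
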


\begin{proof} 
If $\tilde q(s) = 0$, we get $h(\tilde q(s))=0$ and thus, $\vert h(q(s) \vert \le \Vert H\Vert_{\infty}$.
For any $s\in [0,1]$, such that $\tilde q(s)\neq 0$, since $q(s) \neq 0$, it is elementary that $\left|\frac{q(s)}{\left|q(s)\right|}-\frac{\tilde{q}(s)}{\left|\tilde{q}(s)\right|}\right| \leq 2\frac{|q(s)-\tilde{q}(s)|}{|q(s)|}$.
Since the controls of the players depend only on the time variable, by the ordinary differential equation (\ref{ordinary diff}), for all $s \in [0,1]$, $\left|q(s) - \tilde q(s)\right| = \left|q\right|$. Then, we obtain $\big|\frac{q(s)}{\left|q(s)\right|}-\frac{\tilde{q}(s)}{\left|\tilde{q}(s)\right|}\big| \leq 2\hspace{0.05cm}\frac{\left|q\right|}{\left|q(s)\right|}$.
Hence, $|h(q(s))-h(\tilde{q}(s))|=\big|\big\langle H , \frac{q(s)}{|q(s)|} - \frac{\tilde{q}(s)}{|\tilde{q}(s)|} \big\rangle \big| \leq 2\left\|H\right\|_{\infty} \hspace{0.05cm} \frac{\left|q\right|}{\left|q(s)\right|}$.
\end{proof}

\
 
\begin{lem*}\label{lem boundstate}
For any $\varepsilon >0$, there exists $\eta \in (0,\frac{1}{4})$, such that for any $q \in \mathcal{Q}^*$ with $|q|=\eta$ and any $(\tilde{u},\tilde{v}) \in \mathcal{U} \times \mathcal{V}$, we have:
\begin{align}\label{boundstate1}
|G(0,q,\tilde{u},\tilde{v}) -G(0,0,\tilde{u},\tilde{v}| <  \varepsilon.
\end{align} 
\end{lem*}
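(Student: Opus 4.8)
The plan is to bound the difference $|G(0,q,\tilde u,\tilde v)-G(0,0,\tilde u,\tilde v)|$ uniformly in the controls by integrating the pointwise estimate from Lemma~\ref{lem maj}. Since the action part $\langle u\otimes v,A\rangle$ of the running payoff $g$ is the same for both trajectories (controls depend only on time), the difference of the payoffs reduces to
\[
|G(0,q,\tilde u,\tilde v)-G(0,0,\tilde u,\tilde v)| = \left| \int_0^1 \big( h(q(s)) - h(\tilde q(s)) \big)\, ds \right| \le \int_0^1 |h(q(s)) - h(\tilde q(s))|\, ds \le 2\|H\|_\infty \int_0^1 \frac{|q|}{|q(s)|}\, ds,
\]
using Lemma~\ref{lem maj}. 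So everything comes down to estimating $\int_0^1 |q|/|q(s)|\, ds$.

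The key observation is that the dynamics forces $|q(s)|$ to grow at a controlled rate: since $\frac{dq}{dt}(s)=\tilde u(s)\otimes\tilde v(s)$ has $\|\cdot\|_1$-norm equal to $1$ (it is a product of probability vectors), we get $|q(s)| = |q| + s$ exactly. Therefore
\[
\int_0^1 \frac{|q|}{|q(s)|}\, ds = \int_0^1 \frac{|q|}{|q|+s}\, ds = |q|\,\big(\ln(|q|+1)-\ln|q|\big) = |q|\ln\!\Big(1+\tfrac{1}{|q|}\Big).
\]
Writing $\eta = |q|$, this last quantity equals $\eta\ln(1+1/\eta)$, which tends to $0$ as $\eta\to 0^+$ (it behaves like $\eta\ln(1/\eta)$). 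Hence, given $\varepsilon>0$, one picks $\eta\in(0,\tfrac14)$ small enough that $2\|H\|_\infty\,\eta\ln(1+1/\eta) < \varepsilon$; for any $q$ with $|q|=\eta$ and any pair of controls this yields the desired bound. The restriction $\eta<\tfrac14$ is harmless and presumably chosen for later use.

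I do not anticipate a serious obstacle here: the only subtlety is handling the degenerate case $\tilde q(s)=0$, which can only occur at $s=0$ (since $\tilde q(0)=0$ and $|\tilde q(s)|=s$ for $s>0$), so it contributes nothing to the integral; Lemma~\ref{lem maj} already absorbs this case with the bound $\|H\|_\infty$. The main point to get right is the exact identity $|q(s)|=|q|+s$, which relies on the dynamics increasing the $\ell^1$-norm at unit speed, and then recognizing that $\eta\mapsto\eta\ln(1+1/\eta)\to 0$. Everything else is the routine integration displayed above.
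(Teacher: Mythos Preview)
Your proof is correct and follows essentially the same route as the paper: reduce to the integral of $h(q(s))-h(\tilde q(s))$, apply Lemma~\ref{lem maj}, use $|q(s)|=|q|+s$, and integrate to get $|q|\ln\!\big(\frac{1+|q|}{|q|}\big)$. The only cosmetic difference is that the paper further bounds this by $2|q|\,|\ln|q||$ for $|q|<\tfrac14$ (observing $|\ln(1+|q|)|\le|\ln|q||$ in that range) before choosing $\eta$, whereas you invoke $\eta\ln(1+1/\eta)\to 0$ directly; the paper's extra step yields the specific form $\eta|\ln\eta|$ used in the choice of $\eta$ in the subsequent Theorem~\ref{exlimit}.
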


\begin{proof} 
 Let us put $G(q):=G(0,q,\tilde{u},\tilde{v})$ and $G(0):=G(0,0,\tilde{u},\tilde{v})$ and we further denote by $q(\cdot)$ and $\tilde{q}(\cdot)$ the trajectories with initial conditions $q(0)=q$ and $\tilde{q}(0)=0$ obtained from (\ref{ordinary diff}). Then, 
\begin{align*}
|G(q) - G(0)| &=\bigg|\int\limits_0^1 \big(h\left(q(s)\right) - h\left(\tilde{q}(s)\right)\big)ds \bigg|.
\end{align*}
For all $s \in [0,1]$, it holds true that $|q(s)|=|q|+s$. By Lemma \ref{lem maj}, we get:
\begin{align*}
|G(q) - G(0)| &\leq 2 \left\|H\right\|_{\infty}\bigg|\int\limits_0^1  \hspace{0.05cm} \frac{|q|}{|q|+s}ds \bigg| = 2\left\|H\right\|_{\infty}\hspace{0.05cm}|q|\bigg|\int\limits_0^1\frac{ds}{|q|+s}\bigg|\\
&=  2 \left\|H\right\|_{\infty} \bigg|\bigg(\ln\big(1+|q|\big) - \ln\big(|q|\big)\bigg)\bigg|\left|q\right|\\
&=  2 \left\|H\right\|_{\infty} \bigg|\ln\bigg(\frac{1+|q|}{|q|}\bigg)\bigg|\left|q\right|.
\end{align*}
If $|q| < \frac{1}{4}$, we claim that  $\big|\ln(1+|q|) \big| \le |\ln(\vert q \vert)|$. Indeed, since $|q| < \frac{1}{4}$, we have $|q|(1+|q|) < 1$ and thus, $1 < 1+|q| < \frac{1}{|q|}$, so that 
$0\le |\ln(1+|q|)|< |\ln (|q|)|$. Then, for any $q \in \mathcal{Q}^*$ such that $|q| < \frac{1}{4}$, we have:
\begin{align*}
|G(q) - G(0)| \le  4 \left\|H\right\|_{\infty} \big|\ln(\vert q\vert)\big| |q|.
\end{align*}
To conclude, for any $\varepsilon >0$, choose $\eta \in (0,\frac{1}{4})$, such that $\eta |\ln(\eta)| < \frac{\varepsilon}{4\left\|H\right\|_{\infty} +1}$ and the result is immediate.
\end{proof}

\

\begin{thm*}\label{exlimit}
The game $\mathcal{G}(0,0)$ has a value, $W(0,0)=\lim\limits_{q \rightarrow 0} W(0,q)$.
\end{thm*}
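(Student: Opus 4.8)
The plan is to exploit that, for games starting at time $0$, the control spaces $\mathcal{U}=\mathcal{U}_0$, $\mathcal{V}=\mathcal{V}_0$ and the non-anticipative strategy spaces $\mathcal{A}=\mathcal{A}_0$, $\mathcal{B}=\mathcal{B}_0$ do not depend on the initial position, so that the only object changing with $q$ is the payoff functional $G(0,\cdot,\tilde u,\tilde v)$, and by (the proof of) Lemma \ref{lem boundstate} this dependence is \emph{uniformly} small over all controls once $|q|$ is small. Concretely, fix $\varepsilon>0$. The estimate derived in the proof of Lemma \ref{lem boundstate} reads $|G(0,q,\tilde u,\tilde v)-G(0,0,\tilde u,\tilde v)|\le 4\|H\|_{\infty}\,|q|\,|\ln|q||$ for every $q\in\mathcal{Q}^*$ with $|q|<\tfrac14$ and every $(\tilde u,\tilde v)\in\mathcal{U}\times\mathcal{V}$; since $t\mapsto -t\ln t$ is increasing on $(0,\tfrac14)$, one obtains $\eta\in(0,\tfrac14)$ with
\[
\sup_{(\tilde u,\tilde v)\in\mathcal{U}\times\mathcal{V}}\big|G(0,q,\tilde u,\tilde v)-G(0,0,\tilde u,\tilde v)\big|<\varepsilon\qquad\text{for all }q\in\mathcal{Q}^*\text{ with }|q|\le\eta .
\]

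Next I would push this uniform estimate through the $\max\min$. For any $\alpha\in\mathcal{A}$, $\tilde v\in\mathcal{V}$ and $q$ with $|q|\le\eta$ we get $G(0,q,\alpha[\tilde v],\tilde v)\le G(0,0,\alpha[\tilde v],\tilde v)+\varepsilon$ and the reverse inequality with $-\varepsilon$; applying $\inf_{\tilde v}$ and then $\sup_{\alpha}$ (both monotone) yields $|W^-(0,q)-W^-(0,0)|\le\varepsilon$, and symmetrically $|W^+(0,q)-W^+(0,0)|\le\varepsilon$. This is exactly the assertion that a $\tfrac{\varepsilon}{2}$-optimal strategy of a player in $\mathcal{G}(0,q)$ is $\varepsilon$-optimal in $\mathcal{G}(0,0)$. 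Here $W^-(0,0)$ and $W^+(0,0)$ are well-defined from the start — the data of $\mathcal{G}(0,0)$ are perfectly meaningful at the origin (with $h(0)=0$); only the coincidence of the two values was left open because the Evans--Souganidis theorem was unavailable there. Since $W^-(0,q)=W^+(0,q)=W(0,q)$ for $q\in\mathcal{Q}^*$, picking any such $q$ with $|q|\le\eta$ gives
\[
\big|W^-(0,0)-W^+(0,0)\big|\le\big|W^-(0,0)-W^-(0,q)\big|+\big|W^+(0,q)-W^+(0,0)\big|\le 2\varepsilon .
\]
Letting $\varepsilon\downarrow0$ forces $W^-(0,0)=W^+(0,0)$, i.e. $\mathcal{G}(0,0)$ has a value $W(0,0)$. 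With $W(0,0)$ now defined, the same inequalities give $|W(0,q)-W(0,0)|\le\varepsilon$ for all $q\in\mathcal{Q}^*$ with $|q|\le\eta$; as $\varepsilon$ was arbitrary this is precisely $\lim_{q\to 0}W(0,q)=W(0,0)$.

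I do not expect a genuine analytic obstacle at this point: once Lemma \ref{lem boundstate} is available the argument is a soft ``stability under uniform perturbation'' statement for the $\max\min$ of a functional whose strategy domain is held fixed. The two points deserving a line of care are (i) upgrading Lemma \ref{lem boundstate} from the hypothesis $|q|=\eta$ to the whole ball $|q|\le\eta$, which is the monotonicity remark on $t\mapsto -t\ln t$ above, and (ii) being explicit that $W^{\pm}(0,0)$ are meaningful before one knows they agree. All the real work — that the singularity of $h$ at the origin is integrable enough ($|q|\,|\ln|q||\to0$) to disappear in the time integral defining $G$ — is already carried out in Lemmas \ref{lem maj} and \ref{lem boundstate}, and the present theorem is the clean packaging of that fact.
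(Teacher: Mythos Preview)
Your argument is correct and follows essentially the same route as the paper: transfer the uniform payoff bound of Lemma~\ref{lem boundstate} through the $\sup\inf$ defining $W^\pm$, use $W^-(0,q)=W^+(0,q)$ for $q\in\mathcal{Q}^*$ to squeeze $W^-(0,0)$ and $W^+(0,0)$ together, and read off both the existence of the value and the limit. The paper phrases the transfer step via explicit $\tfrac{\varepsilon}{4}$-optimal strategies rather than the monotonicity-of-$\sup\inf$ formulation you use, but this is purely cosmetic; your version is in fact slightly tidier, and your remarks extending Lemma~\ref{lem boundstate} from $|q|=\eta$ to $|q|\le\eta$ via the monotonicity of $t\mapsto -t\ln t$ on $(0,\tfrac14)$ and making the limit statement explicit are welcome clarifications that the paper leaves implicit.
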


\begin{proof}
Let $\varepsilon >0$ and fix $\eta \in (0,\frac{1}{4})$, such that $\eta|\ln(\eta)| < \frac{\varepsilon}{4(4\left\|H\right\|_{\infty}+1)}$. Since the game $\mathcal{G}(0,q)$ admits a value for any $q \in \mathcal{Q}^*$, consider an $\frac{\varepsilon}{4}$-optimal non-anticipative strategy for Player 1 in $\mathcal{G}(0,q)$, where $|q|=\eta$, i.e., a measurable function $\alpha(\cdot)$, such that for all $s \in [0,1]$, $\alpha[\tilde{v}(s)] \in \mathcal{U}$. Then, for any $\tilde{v} \in \mathcal{V}$, we have:
\begin{align*}
G(0,0,\alpha[\tilde{v}],\tilde{v}) > G(0,q,\alpha[\tilde{v}],\tilde{v})- \frac{\varepsilon}{4} > W^-(0,q) - \frac{\varepsilon}{4} - \frac{\varepsilon}{4}
\end{align*}
where the first inequality follows by Lemma \ref{lem boundstate} and the second inequality is due to $\alpha(\cdot)$ being an $\frac{\varepsilon}{4}$-optimal strategy. Reversing the roles of the players and following similar arguments we get that for any $\tilde{u} \in \mathcal{U}$, $G(0,0,\tilde{u},\beta[\tilde{u}]) < G(0,q,\tilde{u},\beta[\tilde{u}]) + \frac{\varepsilon}{4} < W^+(0,q)+\frac{\varepsilon}{4} +\frac{\varepsilon}{4}$, where $\beta(\cdot)$ is an $\frac{\varepsilon}{4}$-optimal non-anticipative strategy of Player 2 in $\mathcal{G}(0,q)$. Since the value exists in $\mathcal{G}(0,q)$ for any $q \in \mathcal{Q}^*$, we have:
\begin{align*}
W^-(0,0)&=\sup\limits_{\alpha \in \mathcal{A}}\inf\limits_{\tilde{v} \in \mathcal{V}}G(0,0,\alpha[\tilde{v}],\tilde{v})> W(0,q) - \frac{\varepsilon}{2}\\
W^+(0,0)&=\inf\limits_{\beta \in \mathcal{B}}\sup\limits_{\tilde{u} \in \mathcal{U}}G(0,0,\tilde{u},\beta[\tilde{u}])< W(0,q) + \frac{\varepsilon}{2}
\end{align*}
and we therefore get $|W^-(0,0)-W^+(0,0)| < \varepsilon$, which proves existence of the value in $\mathcal{G}(0,0)$ since the inequality holds true for any positive $\varepsilon$.
\end{proof}



\

\subsection{The discretized game $\mathcal{G}_{\mathcal{P}}\left(t_0,q_0\right)$} \label{section 3.3.3}

\hspace{0.4cm}In this section, we introduce a discrete version of our differential game  and prove a strong relation between its value and the value of the original repeated game.  For that purpose, we next  consider subdivisions of $[0,1]$ and we define a family of discretized games played on them.

\begin{itemize}
\item For all $t_0 \in [0,1)$, $\mathcal{P}$ stands for any countable subdivision of $[t_0,1]$ and if $\mathcal{P}$ is finite let $\omega_{\mathcal{P}}$ denote the number of intervals of such subdivision, otherwise we put $\omega_{\mathcal{P}}=\infty$.
\item Given $N \in \mathbb{N}^*$, let $\mathcal{P}_N= (t_k^N)_{0 \leq k \leq N}$, where $t_k^N:=\frac{k}{N}$ stands for the uniform subdivision of $[0,1]$ in $N$ intervals. We will also use the notation $\mathcal{P}_N=\big(t_n^N\big) _{0\leq n \leq N}$ for $n=N-k$.
\item Given $\lambda \in (0,1)$, $\mathcal{P}_{\lambda}=(t_k^{\lambda})_{k \geq 0}$ stands for the countable subdivision of $[0,1]$ induced by the discount factor $\lambda$, such that $t_0^{\lambda}=0$, $t_1^{\lambda}=\lambda$, $t_k^{\lambda}:=\lambda+...+\lambda(1-\lambda)^{k-1}$, for $k \geq 1$ and $t_{\infty}^{\lambda}=1$. 
\item By $\pi_k:=t_{k+1}-t_k$ is denoted the $k$-th increment and $|\mathcal{P}|$ stands for the mesh of the subdivision $\mathcal{P}$, i.e., $|\mathcal{P}|=\sup\limits_k |\pi_k|$.
\end{itemize}

Given $\mathcal{P}$, for all $(t_0,q_0) \in [0,1) \times \mathcal{Q}$ we associate to $\mathcal{G}\left(t_0,q_0\right)$ a discrete time game adapted to the subdivision $\mathcal{P}$ denoted by $\mathcal{G}_{\mathcal{P}}(t_0,q_0)$. Such a discrete time game starts at time $t_0$, has initial state $q_0 \in \mathcal{Q}$ and is repeated $\omega_{\mathcal{P}}$ times. At time $t_k \in \mathcal{P}$, both players observe the current state $q_k$ and choose simultaneously and independently actions $u_{k+1}$ and $v_{k+1}$ in $\Delta(I)$ and $\Delta(J)$ respectively. The control sets are denoted by $\Delta(I)^{\omega_{\mathcal{P}}}$ and $\Delta(J)^{\omega_{\mathcal{P}}}$, indicating that players now choose piecewise constant functions defined over the $\omega_{\mathcal{P}}$-times Cartesian product of their corresponding mixed strategy sets. We will use the notation $\hat{u}=(u_k)_{k=1}^{\omega_{\mathcal{P}}}$ and $\hat{v}=(v_k)_{k=1}^{\omega_{\mathcal{P}}}$. The state evolves according to:
\begin{align*}
\hspace{1cm}\begin{cases}q_{k+1}=q_k+\pi_{k} u_{k+1} \otimes v_{k+1}, \hspace{0.5cm}k\geq 0,\\
q_0=q.
\end{cases} 
\end{align*}   
At stage $k$, the expected payoff that Player 1 receives from Player 2 is given by:
\begin{align} \label{e24} 
g(q_{k-1},u_k,v_k) =  h(q_{k-1}) + u_{k}Av_{k} 
\end{align}  
and given $(\hat{u},\hat{v}) \in \Delta(I)^{h_{\mathcal{P}}} \times \Delta(J)^{h_{\mathcal{P}}}$, the total payoff of the game is 
\begin{align} \label{e25}
G_{\mathcal{P}}(q_0,\hat{u},\hat{v})=\sum\limits_{k=1}^{h_{\mathcal{P}}}\pi_{k-1}g(q_{k-1},u_k,v_k).
\end{align}
\par Given $\mathcal{P}$, for all $(t_0,q_0) \in [0,1) \times \mathcal{Q}$, the game $\mathcal{G}_{\mathcal{P}}(t_0,q_0)$ admits a value. Following \cite{friedman1970definition}, the value of the game denoted by $W_{\mathcal{P}}(t_0,q_0)$ is characterized by means of discrete version of the HJBI equations (\ref{e22}):
\begin{align} \label{def of ddpp}
\begin{cases}W_{\mathcal{P}}(t_k,q_k) =\pi_k h(q) + \max\limits_{u \in \Delta(I)}\min\limits_{v \in \Delta(J)}\left(\big\langle \pi_ku \otimes v , A\big\rangle+W_{\mathcal{P}}\big(t_{k+1},q_k+\pi_ku\otimes v\big)\right),\\
W_{\mathcal{P}}(1,q)=0\\
\end{cases}
\end{align}
We will refer to this equation as the \textit{dicrete} Dynamic Programming Principle that will be abbreviated to discrete DPP. 

\

\par In the sequel, we compare the $n$-stage (resp. the $\lambda$-discounted)  game with an appropriate discretization of the differential game.  Note that  they differ essentially in the nature of their  outcome space: While  in  the discretized game  $\mathcal{G}_{\mathcal{P}}\left(t_0,q_0\right)$, the play generated by pure strategies is  deterministic and lives in $\mathbb{R}^{I\times J}$, in the original game $\Gamma_N(z_0)$ (resp. $\Gamma_{\lambda}(z_0)$)  the play generated by behavioral strategies is random and takes its values in a discrete subset of $\mathbb{R}^{I\times J}$.

\subsection{Coincidence of $\Psi_N$ and $W_{\mathcal{P}_N}$}
We first prove that $\Psi_N$ preserves a very similar property to the one satisfied by $\textbf{V}_n$ in Proposition \ref{pro affineproperty} and we then show $\Psi_N = W_{\mathcal{P}_N}$. Recall that  the map $\Psi_N:\mathcal{P}_N \times \mathcal{Q}_N \rightarrow \mathbb{R}$ has been defined by (\ref{e13}) and is characterized by the recursive formula (\ref{e14}). It is then clear that  an extension of $\Psi_N$ 
to a map $\Psi_N: \mathcal{P}_N \times \mathcal{Q} \rightarrow \mathbb{R}$  is obtained  if we define it by the same recursive formula and terminal condition; namely for any $q \in \mathcal{Q}$,
\begin{align} \label{def of psi}
\begin{cases}\Psi_N\left(t_k^N,q\right)= \frac{h(q)}{N}+ \max\limits_{u \in \Delta(I)} \min\limits_{v \in \Delta(J)} \left(\sum\limits_{i,j} u_iv_j\left(\frac{a_{ij}}{N} + \Psi_N\left(t_{k+1}^N,q+\frac{e_{ij}}{N}\right) \right)\right), \hspace{0.15cm} 0 \leq k \leq N-1, \\
\Psi_N(1,q)=0, \hspace{0.4cm} k=N, \hspace{0.1cm} 
\end{cases}
\end{align}

\begin{pro*}\label{aff}
Let $N \in \mathbb{N}^*$. There exists a sequence $(k_{n,s}, c_{n,s}) \in \mathcal{M}^{I \times J} \times \mathbb{R}$ where  $n \in \{0,...,N\}, s \in\mathbb{R}_+ $  such that for all $q \in \mathcal{Q}$, and $n \in \{0,...,N\}$:
\begin{equation}\label{affeq}
\Psi_N(t^N_n,q)=\left\langle k_{n,|q|}, q \right\rangle + c_{n,|q|}. 
\end{equation}
The general terms of the sequence $(k_{n,s})$ are given for $s\in \mathbb{R}_{+}^*$ by:
\begin{align*}
k_{n,s}=\Lambda_{N-n}(N s) H 
\end{align*}

\end{pro*}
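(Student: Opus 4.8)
The plan is to mirror the argument of Proposition \ref{pro affineproperty}, carrying out a descending induction on $n$ from $N$ down to $0$ and using the recursion together with the terminal condition (\ref{def of psi}) that characterise the extended map $\Psi_N$ on $\mathcal{P}_N\times\mathcal{Q}$. Three elementary facts drive the argument: (i) for every $q\in\mathcal{Q}$ and every $(i,j)$ one has $|q+e_{ij}/N|=|q|+1/N$, so the one-step transition acts on the total mass $|q|$ in a way that does not depend on the action profile; (ii) on each level set $\{q\in\mathcal{Q}:|q|=s\}$ with $s>0$ the map $h$ is \emph{linear}, namely $h(q)=\langle H/s,\,q\rangle$; and (iii) elementary properties of the $\textbf{val}$ operator --- invariance under adding a real constant and positive homogeneity --- as already used in the proof of Proposition \ref{pro affineproperty}. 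Together these force $\Psi_N(t^N_n,\cdot)$ to be affine on each level set, with slope and intercept depending on $q$ only through $|q|$.

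For the base case $n=N$, the terminal condition of (\ref{def of psi}) gives $\Psi_N(1,q)=0$ for all $q$, so I take $k_{N,s}=0$ and $c_{N,s}=0$; since $\Lambda_0(Ns)=0$ this matches the announced formula. For the inductive step, assume (\ref{affeq}) holds at $n+1$ with $k_{n+1,\cdot}$ as stated, and fix $q$ with $|q|=s>0$. Substituting the inductive hypothesis into the first line of (\ref{def of psi}) and using (i) to replace $|q+e_{ij}/N|$ by $s+1/N$, the coefficients $k_{n+1,s+1/N}$ and $c_{n+1,s+1/N}$ become independent of $(i,j)$; pulling them out of the $\textbf{val}$ via (iii) and the bilinearity of $\langle\cdot,\cdot\rangle$, and then rewriting $h(q)/N$ as $\langle H/(Ns),\,q\rangle$ via (ii), yields
\begin{align*}
\Psi_N(t^N_n,q)=\Big\langle \tfrac{H}{Ns}+k_{n+1,\,s+1/N},\,q\Big\rangle + c_{n+1,\,s+1/N}+\tfrac{1}{N}\,\textbf{val}\big(A+k_{n+1,\,s+1/N}\big).
\end{align*}
Hence (\ref{affeq}) holds at $n$ with $k_{n,s}=\frac{H}{Ns}+k_{n+1,s+1/N}$ and $c_{n,s}=c_{n+1,s+1/N}+\frac{1}{N}\textbf{val}(A+k_{n+1,s+1/N})$; in particular the $c_{n,s}$ are obtained from the $k_{n,\cdot}$ by an explicit finite recursion, which settles their existence. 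Finally, by the inductive hypothesis $k_{n+1,s+1/N}=\Lambda_{N-n-1}(Ns+1)\,H$, and since $\frac{1}{Ns}+\sum_{l=1}^{N-n-1}\frac{1}{Ns+l}=\sum_{l=0}^{N-n-1}\frac{1}{Ns+l}=\Lambda_{N-n}(Ns)$, I obtain $k_{n,s}=\Lambda_{N-n}(Ns)\,H$, which is the asserted closed form.

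It remains to treat the degenerate level set $\{q\in\mathcal{Q}:|q|=0\}$, which reduces to the single point $q=0$ since the entries of $q$ are non-negative. There $h(0)=0$, and (\ref{def of psi}) together with the inductive hypothesis applied at the points $e_{ij}/N$ (which have mass $1/N$) gives $\Psi_N(t^N_n,0)=c_{n+1,1/N}+\frac{1}{N}\textbf{val}(A+k_{n+1,1/N})$, which is of the required form $\langle k_{n,0},0\rangle+c_{n,0}$ as soon as $c_{n,0}$ is defined by this identity ($k_{n,0}$ being immaterial, since it is only ever paired with $q=0$). This closes the induction.

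I do not anticipate a real obstacle: the computation is essentially the one of Proposition \ref{pro affineproperty}, transported through the scaling $z=Nq$, $\tau=N|q|$ underlying the definition (\ref{e13}) of $\Psi_N$. The only point not already covered by Proposition \ref{pro affineproperty} --- and hence the one requiring a little care --- is that the recursion (\ref{def of psi}) is now imposed also at non-lattice states $q\in\mathcal{Q}\setminus\mathcal{Q}_N$. This is harmless, because the identity $|q+e_{ij}/N|=|q|+1/N$ holds for every $q\in\mathcal{Q}$, and the recursion only ever refers to $\Psi_N$ at strictly positive masses $|q|+1/N>0$, so the formula for $k_{n,s}$ propagates unchanged for arbitrary $s>0$.
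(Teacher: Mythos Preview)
Your proof is correct and follows essentially the same approach as the paper: a backward induction on $n$ using the recursion and terminal condition (\ref{def of psi}), yielding the same recurrences $k_{n,s}=\frac{H}{Ns}+k_{n+1,s+1/N}$ and $c_{n,s}=c_{n+1,s+1/N}+\frac{1}{N}\textbf{val}(A+k_{n+1,s+1/N})$. You are slightly more explicit than the paper in two respects --- you verify directly that $k_{n,s}=\Lambda_{N-n}(Ns)H$ solves the recursion (the paper leaves this implicit), and you spell out the treatment of the degenerate case $q=0$ rather than dispatching it by convention --- but these are expository refinements, not a different argument.
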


\begin{proof} For $s=0$ we take by convention $k_{n,0}=0$ for all $n \in \{0,...,N\}$. Let $q \in \mathcal{Q}^*$, we proceed by  backward induction on the variable $n$: 

\g For $n=N$, $\Psi_N(1,q)=0$ for any $q \in \mathcal{Q}^*$ and thus,  one can take $k_{N, s} =0$ and $c_{N,s}=0$ for all $s>0$.
Assume the result is true for $n=m$, i.e., for all $q \in \mathcal{Q}^*$, there exist $k_{m,s} \in \mathcal{M}^{I \times J}$ and $c_{m,s} \in \mathbb{R}$, such that (\ref{affeq}) is satisfied. For $n=m$, for all $q \in \mathcal{Q}^*$, we get from (\ref{def of psi}):
\pg 
\begin{align*}
\Psi\left( t_m^N,q \right)
&=\left\langle \frac{H}{N}, \frac{q}{|q|} \right\rangle +\max\limits_{u \in \Delta(I)} \min\limits_{v \in \Delta(J)} \left(\sum\limits_{i,j} u_iv_j\left(\frac{a_{ij}}{N} + \left\langle k_{m+1,|q|+\frac{1}{N}}, q+ \frac{e_{ij}}{N} \right\rangle + c_{m+1,|q|+\frac{1}{N}}\right)\right)\\
&=\left\langle \frac{H}{N|q|} + k_{m+1,|q|+ \frac{1}{N}}, q \right\rangle + \max\limits_{u \in \Delta(I)} \min\limits_{v \in \Delta(J)} \left(\sum\limits_{i,j} u_iv_j\left(\frac{a_{ij}}{N} + \left\langle k_{m+1,|q|+\frac{1}{N}}, \frac{e_{ij}}{N} \right\rangle  \right)\right)+\\ &\hspace{11.7cm} +c_{m+1,|q|+\frac{1}{N}}.
\end{align*}
\pg and thus (\ref{affeq}) is satisfied if we put:
\begin{eqnarray*} \label{newcoeff}
&k_{m,s}=\frac{H}{N s}+k_{m+1,s+\frac{1}{N}}\\
&c_{m,s}=\frac{1}{N} {\bf val} \left(A + k_{m+1,s+\frac{1}{N}} \right)+c_{m+1,s+\frac{1}{N}}.
\end{eqnarray*}
This ends the induction.
\end{proof}

\

%
\

\hspace{-0.65cm}\textbf{Notation.} Given $N \in \mathbb{N}^*$ and $q_0 \in \mathcal{Q}$, for all $t \in \mathcal{P}_N$, we define the subset of $\mathcal{Q}$:
\begin{align*}
\mathcal{Q}_{N}(t,q_0)=\big\{q \in \mathcal{Q} : |q|=|q_0|+t \big\}.
\end{align*}

\

\begin{pro*} \label{pro coinc}
Given $N \in \mathbb{N}^*$ and $q_0 \in \mathcal{Q}$, for all $t \in \mathcal{P}_N$ and all $q \in \mathcal{Q}_N(t,q_0)$, 
\begin{align*} 
\Psi_N(t,q)=W_{\mathcal{P}_N}\left(t,q\right).
\end{align*}
\end{pro*}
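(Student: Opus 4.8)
The statement asserts that the extended map $\Psi_N$ defined by the recursive formula (\ref{def of psi}) coincides, on the relevant slices of the state space, with the value $W_{\mathcal{P}_N}$ of the discretized differential game adapted to the uniform subdivision $\mathcal{P}_N$. The natural approach is to observe that both functions are characterized by a backward recursion on the grid $\mathcal{P}_N=(t_n^N)_{0\le n\le N}$, and to show that these two recursions are in fact \emph{the same} recursion once one tracks the state correctly. I would proceed by backward induction on $n$ (equivalently, forward induction on $k=N-n$), comparing (\ref{def of psi}) with the discrete DPP (\ref{def of ddpp}) specialized to $\mathcal{P}=\mathcal{P}_N$, where every increment is $\pi_k=1/N$.

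The base case is immediate: for $t=1$ (i.e.\ $n=N$), both $\Psi_N(1,q)=0$ and $W_{\mathcal{P}_N}(1,q)=0$ by their respective terminal conditions. For the inductive step, fix $t=t_n^N$ with $n<N$ and $q\in\mathcal{Q}_N(t,q_0)$, so $|q|=|q_0|+t$. In the discrete DPP (\ref{def of ddpp}) the continuation state is $q+\pi_k u\otimes v = q + \tfrac1N u\otimes v$, which has $\ell^1$-norm $|q|+\tfrac1N$, hence lies in $\mathcal{Q}_N(t+\tfrac1N,q_0)$ — exactly the slice on which the inductive hypothesis $\Psi_N=W_{\mathcal{P}_N}$ is available at time $t+\tfrac1N$. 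The key point is that in (\ref{def of psi}) the maximization is over the finitely many pure transitions $e_{ij}/N$, whereas in (\ref{def of ddpp}) it is over arbitrary mixed $u\otimes v\in\{u\otimes v: u\in\Delta(I),v\in\Delta(J)\}$; I must check that these two $\mathbf{val}$-operators give the same value. This follows because, by Proposition \ref{aff}, $\Psi_N(t+\tfrac1N,\cdot)$ is affine on each norm-slice with gradient $k_{n+1,|q|+1/N}=\Lambda_{N-n-1}(N|q|+1)H$ independent of the direction within the slice; so
\[
\sum_{i,j}u_iv_j\Big(\tfrac{a_{ij}}{N}+\Psi_N\big(t+\tfrac1N,q+\tfrac{e_{ij}}{N}\big)\Big)
=\big\langle k_{n+1,|q|+1/N},q\big\rangle+c_{n+1,|q|+1/N}+\tfrac1N\big\langle u\otimes v,\,A+k_{n+1,|q|+1/N}\big\rangle,
\]
and taking $\mathbf{val}$ over $(u,v)$ reproduces exactly the inner optimization of (\ref{e24})–(\ref{def of ddpp}) with running payoff $h(q)+uAv$ weighted by $\pi_k=1/N$. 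Collecting the $h(q)/N$ term and the $\mathbf{val}$ term then shows $\Psi_N(t,q)$ satisfies the same one-step relation as $W_{\mathcal{P}_N}(t,q)$, closing the induction.

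The main obstacle, and the place requiring care, is the reconciliation of the two optimization ranges — pure transitions versus mixed products — together with bookkeeping that the continuation state stays on a slice where Proposition \ref{aff} applies and where the inductive hypothesis has been established (this is precisely why the statement is phrased relative to a fixed $q_0$ and the slices $\mathcal{Q}_N(t,q_0)$, rather than for all $q\in\mathcal{Q}$ at once). A secondary subtlety is the treatment of $q=0$: there $h(0)=0$ and $k_{n,0}=0$ by the convention in Proposition \ref{aff}, so the affine representation and the argument above degenerate gracefully, but one should note it explicitly. Once the affine structure of $\Psi_N$ on slices is invoked, the remaining verification is routine algebra using linearity of the inner product and $\sum_{ij}u_iv_j=1$, exactly as in the proof of Proposition \ref{pro affineproperty}.
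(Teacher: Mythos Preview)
Your proposal is correct and follows essentially the same approach as the paper: both arguments rest on the observation that $\Psi_N$ and $W_{\mathcal{P}_N}$ share the terminal condition and that the two one-step recursions coincide once the affine structure of $\Psi_N$ on norm-slices (Proposition~\ref{aff}) is used to pass from the mixture $\sum_{ij}u_iv_j\Psi_N(\cdot,q+e_{ij}/N)$ to $\Psi_N(\cdot,q+\tfrac1N u\otimes v)$. Your version is slightly more explicit about the bookkeeping of the slices $\mathcal{Q}_N(t,q_0)$ and the $q=0$ boundary case, but the core idea and the key lemma invoked are identical.
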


\begin{proof} 
Both functions share the same terminal condition, i.e $\Psi_N(1,q)=W_{\mathcal{P}_N}(1,q)=0$, for all $q \in \mathcal{Q}$, \big(see (\ref{e13}), and characterization of $W_{\mathcal{P}_N}$ in terms of discrete DPP\big). Thus, it suffices to prove that $\Psi_N$ and $W_{\mathcal{P}_N}$ satisfy the same recursive formula. To that purpose, fix $q_0 \in \mathcal{Q}$ and time $t=\frac{k}{N}$, where $k \in \big\{0,...,N-1\big\}$. By the discrete DPP, it follows that for all $q \in \mathcal{Q}_{N}\left(\frac{k}{N},q_0\right)$,
\begin{align} \label{e28}
W_{\mathcal{P}_N}\left(\frac{k}{N},q\right) = \frac{h(q)}{N}  + \mathop {\max}\limits_{u \in \Delta(I)} \mathop {\min}\limits_{v \in \Delta(J)} \bigg( \left\langle \frac{u \otimes v}{N},A \right\rangle + W_{\mathcal{P}_N}\left(\frac{k+1}{N},q + \frac{u \otimes v}{N}\right) \bigg)
\end{align}
By (\ref{def of psi}), for any $k \in \left\{0,...,N-1 \right\}$,
\begin{align*}
\Psi_N\left(\frac{k}{N},q\right) = \frac{h(q)}{N}   + \mathop {\max}\limits_{u \in \Delta(I)} \mathop {\min}\limits_{v \in \Delta(J)} \left( \sum\limits_{ij} u_iv_j \left( \frac{a_{ij}}{N} + \Psi_N\left(\frac{k+1}{N},q + \frac{e_{ij}}{N}\right) \right) \right), 
\end{align*}
where $q \in \mathcal{Q}_{N}\left(\frac{k}{N},q_0\right)$. Equivalently:
\begin{align*}
\Psi_N\left(\frac{k}{N},q\right) = \frac{h(q)}{N}  + \mathop {\max}\limits_{u \in \Delta(I)} \mathop {\min}\limits_{v \in \Delta(J)} \left( \sum\limits_{ij} u_iv_j \frac{a_{ij}}{N} + \sum\limits_{ij} u_iv_j \Psi_n\left(\frac{k+1}{N}, q + \frac{e_{ij}}{N} \right) \right). 
\end{align*}
By Proposition \ref{aff}, $\Psi_N$ is affine in the state variable $q$ and it thus follows:
\begin{align*}
\Psi_N\left(\frac{k}{N},q\right) = \frac{h(q)}{N}  + \mathop {\max}\limits_{u \in \Delta(I)} \mathop {\min}\limits_{v \in \Delta(J)} \left( \sum\limits_{ij} u_iv_j \frac{a_{ij}}{N} + \Psi_n\left(\frac{k+1}{N}, \sum\limits_{ij} u_iv_j\left(q + \frac{e_{ij}}{N}\right) \right) \right). 
\end{align*}
Hence, due to $\sum_{ij}u_iv_j=1$, we get:
\begin{align*}
\Psi_N\left(\frac{k}{N},q\right) = \frac{h(q)}{N}  + \mathop {\max}\limits_{u \in \Delta(I)} \mathop {\min}\limits_{v \in \Delta(J)} \bigg( \left\langle \frac{u \otimes v}{N},A \right\rangle + \Psi_N\left(\frac{k+1}{N}, q + \frac{u \otimes v}{N} \right) \bigg). 
\end{align*}
This in view of (\ref{e28}), proves that $\Psi_N=W_{\mathcal{P}_N}$.
\end{proof}

\

\subsection{Coincidence of $\Psi_{\lambda}$ and $W_{\mathcal{P}_{\lambda}}$}

\hspace{0.4cm} In view of \cite{contou2011contributions}, the convergence is not uniform in the state variable and as a consequence we cannot use the Tauberian theorem of \cite{ziliotto2016tauberian} to obtain an immediate result on the convergence of the $\lambda$-discounted value, when $\lambda$ tends to $0$. In the case of the $n$-stage value, coincidence between $\Psi_N$ and $W_{\mathcal{P}_N}$ follows  immediately from the preceding paragraph since both functions admit the terminal value zero and satisfy the same recursive equation. Concerning the $\lambda$-discounted game, we  need  to follow a slightly different approach to prove such a coincidence.

\

Recall that  the map $\mathbf{V}_{\lambda}: \mathcal{Z} \rightarrow \mathbb{R}$ has been characterised by the recursive formula (\ref{eq1'}). It is then clear that  an extension of $\textbf{V}_{\lambda}$ 
to a map $\textbf{V}_{\lambda}: \mathcal{Q} \rightarrow \mathbb{R}$  is obtained  if we define it by the same recursive formula; namely, for any $q \in \mathcal{Q}$,
\begin{align}\label{eqext}
{{\bf{V}}_{\lambda}}\left( q \right) &=  \lambda h(q)  + \mathop {\max }\limits_{u \in \Delta(I)} \mathop {\min }\limits_{v \in \Delta(J)}  \bigg(\sum\limits_{i,j}u_iv_j\left( {\lambda{a_{ij}} + (1-\lambda) {{\bf{V}}_{{\lambda}}}\left( {q + {e_{ij}}} \right)} \right) \bigg)
\end{align}
Given $\lambda \in (0,1)$, let us define the function $\Psi_{\lambda}: \mathcal{Q} \rightarrow \mathbb{R}$, such that 
\begin{align} \label{defPsilambda}
\Psi_{\lambda}(q):=\textbf{V}_{\lambda}\left(\frac{q}{\lambda}\right).
\end{align}
By (\ref{eqext}), $\Psi_{\lambda}$ satisfies the following equation:
\begin{align}\label{Psi_lambda}
{\Psi_{\lambda}}\left( q \right) =  \lambda h(q)  + \mathop {\max }\limits_{u \in \Delta(I)} \mathop {\min }\limits_{v \in \Delta(J)} \left( {\sum\limits_{i,j} {{u_i}{v_j}\left( {\lambda{a_{ij}} + (1-\lambda) {{\Psi}_{{\lambda}}}\left( {q + {\lambda e_{ij}}} \right)} \right)} } \right)
\end{align}
By \ref{def of ddpp}, for any $q \in \mathcal{Q}$, we get:
\begin{align}\label{9}
W_{\mathcal{P}_{\lambda}}(0,q)&=\lambda  h(q)+ \max\limits_{u \in \Delta(I)} \min\limits_{v \in \Delta(J)}\left\{\lambda  \sum\limits_{ij}u_iv_ja_{ij}  + W_{\mathcal{P}_{\lambda}}\left(\lambda,q+\lambda u \otimes v\right)\right\} \nonumber \\
&=\lambda  h(q)+ \max\limits_{u \in \Delta(I)} \min\limits_{v \in \Delta(J)}\left\{\lambda  \sum\limits_{ij}u_iv_ja_{ij}  + (1-\lambda)W_{\mathcal{P}_{\lambda}}\left(0,q+\lambda u \otimes v\right)\right\},
\end{align}
where the last equation follows by stationarity of the discounted game $\mathcal{G}_{\mathcal{P}_{\lambda}}(t,q)$. In the sequel, we put $W_{\mathcal{P}_{\lambda}}(q)=W_{\mathcal{P}_{\lambda}}(0,q)$.

\

\g \textbf{Notation}. We use the following notations:





\begin{itemize}
\item The norm $\left\|\cdot \right\|_{\infty}$ of a bounded real-valued function $f$, defined on $\mathcal{Q}$, is \[\left\|f\right\|_{\infty}=\sup\limits_{q \in \mathcal{Q}} |f(q)|.\] 
\item $\mathcal{F}_{\mathcal{B}}$ stands for the set of real-valued bounded functions $f$ defined on $\mathcal{Q}$ with the norm $\left\|\cdot \right\|_{\infty}$. Clearly, $\mathcal{F}_{\mathcal{B}}$ is a Banach space. 
\item $\mathcal{L}_{\mathcal{B}}$ stands for the subspace of $\mathcal{F}_{\mathcal{B}}$, such that if $f \in \mathcal{L}_{\mathcal{B}}$ then there exist bounded and measurable $K:\mathbb{R} \rightarrow \mathbb{R}^{I \times J}$ and $c:\mathbb{R} \rightarrow \mathbb{R}$, such that $f(q)=\left\langle K(|q|),q \right\rangle + c(|q|)$, for any $q \in \mathcal{Q}$.
\item For all $f \in \mathcal{F}_{\mathcal{B}}$ and any $\lambda \in (0,1)$, we define the operator $\Theta_{\lambda}$ as follows:
\begin{align}\label{22}
\Theta_{\lambda}(f)(q)=\lambda\hspace{0.1cm} h(q) + \max\limits_{u \in \Delta(I)}\min\limits_{v \in \Delta(J)} \left(\lambda \sum\limits_{ij}u_iv_j a_{ij} + (1-\lambda)f(q+ \lambda u \otimes v)\right)
\end{align}
and clearly $\Theta_{\lambda}$ admits $W_{\mathcal{P}_{\lambda}}(\cdot)$ as unique fixed point. 
\item For all $f \in \mathcal{F}_{\mathcal{B}}$ and any $\lambda \in (0,1)$, we define the operator $\textbf{T}_{\lambda}$ as follows:
\begin{align}\label{2}
\textbf{T}_{\lambda}(f)(q)=\lambda\hspace{0.1cm} h(q) + \max\limits_{u \in \Delta(I)}\min\limits_{v \in \Delta(J)} \left(\sum\limits_{ij}u_iv_j\big(\lambda a_{ij} + (1-\lambda)f(q+ \lambda e_{ij})\big)\right)
\end{align}
Likewise, $\textbf{T}_{\lambda}$ admits $\Psi_{\lambda}(\cdot)$ as unique fixed point. 
\item Given $\lambda \in (0,1]$, for any $t \in \mathcal{P}_{\lambda}$, we define the following subset of $\mathcal{Q}$:
\begin{align*}
\mathcal{Q}_{\lambda}(t)=\left\{q \in \mathcal{Q} : |q|=t \right\}
\end{align*}
\end{itemize}


\

\

\

\begin{pro*} \label{linop}
If $f \in \mathcal{L}_{\mathcal{B}}$, then $\textbf{T}_{\lambda}(f) \in \mathcal{L}_{\mathcal{B}}$ and $\textbf{T}_{\lambda}(f)=\Theta_{\lambda}(f)$.
\end{pro*}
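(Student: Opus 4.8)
The plan is to establish both assertions by a single direct computation, whose only real ingredient is that the $\ell^1$-norm is additive on $\mathcal{Q}=\mathbb{R}^{I\times J}_+$. First I would record the elementary identity that, since all coordinates are nonnegative and $\lambda>0$, for every $q\in\mathcal{Q}$, every $(i,j)\in I\times J$ and every $(u,v)\in\Delta(I)\times\Delta(J)$,
\[
|q+\lambda e_{ij}|=|q|+\lambda\qquad\text{and}\qquad |q+\lambda\,u\otimes v|=|q|+\lambda\,|u\otimes v|=|q|+\lambda,
\]
where $|u\otimes v|=\big(\sum_i u_i\big)\big(\sum_j v_j\big)=1$; in particular, in both (\ref{22}) and (\ref{2}) the function $f$ is evaluated only at points of $\ell^1$-norm $|q|+\lambda$.

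Next I would fix $f\in\mathcal{L}_{\mathcal{B}}$, so that $f(q)=\langle K(|q|),q\rangle+c(|q|)$, and observe that affinity of $f$ on each level set $\{|q|=s\}$ passes through the averaging $\sum_{ij}u_iv_j(\cdot)$: because the coefficients $K(|q|+\lambda)$ and $c(|q|+\lambda)$ do not depend on $(i,j)$ and the weights $u_iv_j$ sum to one, while $\sum_{ij}u_iv_j e_{ij}=u\otimes v$,
\[
\sum_{ij}u_iv_j\,f(q+\lambda e_{ij})=\big\langle K(|q|+\lambda),\,q+\lambda\,u\otimes v\big\rangle+c(|q|+\lambda),
\]
and by the norm identity above the right-hand side equals $f(q+\lambda\,u\otimes v)$. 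Substituting this into the definition (\ref{2}) of $\textbf{T}_{\lambda}(f)(q)$ and comparing termwise with (\ref{22}) gives $\textbf{T}_{\lambda}(f)(q)=\Theta_{\lambda}(f)(q)$ for every $q\in\mathcal{Q}$, which is the second assertion.

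For the first assertion I would expand once more, $f(q+\lambda\,u\otimes v)=\langle K(|q|+\lambda),q\rangle+\lambda\langle K(|q|+\lambda),u\otimes v\rangle+c(|q|+\lambda)$, so that the part not depending on $(u,v)$, namely $(1-\lambda)\langle K(|q|+\lambda),q\rangle+(1-\lambda)c(|q|+\lambda)$, factors out of the $\max\min$, while what remains inside is $\lambda\,\max_u\min_v\langle u\otimes v,\,A+(1-\lambda)K(|q|+\lambda)\rangle=\lambda\,\textbf{val}\big(A+(1-\lambda)K(|q|+\lambda)\big)$. Using $\lambda h(q)=\langle\tfrac{\lambda}{|q|}H,q\rangle$ for $q\neq0$ (at $q=0$ the pairing vanishes, so no constraint arises there), one reads off
\[
\textbf{T}_{\lambda}(f)(q)=\big\langle K'(|q|),q\big\rangle+c'(|q|),\qquad K'(s):=\tfrac{\lambda}{s}H+(1-\lambda)K(s+\lambda),\qquad c'(s):=(1-\lambda)c(s+\lambda)+\lambda\,\textbf{val}\big(A+(1-\lambda)K(s+\lambda)\big),
\]
with $K'(0):=0$. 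Here $K'$ and $c'$ are measurable (composition of measurable maps with the continuous operator $\textbf{val}$), $c'$ is bounded, and $\textbf{T}_{\lambda}(f)$ is itself bounded, for instance by $\lambda\|H\|_{\infty}+\lambda\|A\|_{\infty}+(1-\lambda)\|f\|_{\infty}$; hence $\textbf{T}_{\lambda}(f)\in\mathcal{F}_{\mathcal{B}}$ and thus $\textbf{T}_{\lambda}(f)\in\mathcal{L}_{\mathcal{B}}$. I do not expect a genuine obstacle in this argument; the only point calling for care is the bookkeeping at the origin, where the coefficient $\tfrac{\lambda}{|q|}H$ is unbounded but enters solely through $\langle\tfrac{\lambda}{|q|}H,q\rangle=\lambda h(q)$, which stays bounded by $\lambda\|H\|_{\infty}$, and where the affine representation holds trivially.
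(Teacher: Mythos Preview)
Your argument is correct and follows essentially the same route as the paper: expand $f(q+\lambda e_{ij})$ using the affine representation on the level set $\{|q|+\lambda\}$, pull the $(i,j)$-independent part out of the $\max\min$, and read off the new coefficients $K'$ and $c'$. The only cosmetic difference is that you establish $\textbf{T}_{\lambda}(f)=\Theta_{\lambda}(f)$ first (via $\sum_{ij}u_iv_j f(q+\lambda e_{ij})=f(q+\lambda\,u\otimes v)$) and the affine structure second, whereas the paper proceeds in the opposite order; your explicit bookkeeping at $q=0$ is also a bit more careful than the paper's treatment.
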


\begin{proof}
Let $f \in \mathcal{L}_{\mathcal{B}}$. By definition of the operator $\textbf{T}_{\lambda}$, it is easy to see that $\textbf{T}_{\lambda}(f)$ is also bounded.
\g For the rest of the proof, let $\lambda \in (0,1)$ and fix $t \in \mathcal{P}_{\lambda}$. Then, for any $q \in \mathcal{Q}_{\lambda}(t)$,
\begin{align*}
\textbf{T}_{\lambda}(f)(q)&=\lambda\hspace{0.1cm} h(q) + \max\limits_{u \in \Delta(I)}\min\limits_{v \in \Delta(J)} \left(\sum\limits_{ij}u_iv_j\big(\lambda a_{ij} + (1-\lambda)f(q+ \lambda e_{ij})\big)\right)\\
&=\lambda\hspace{0.1cm} \left<H,\frac{q}{|q|} \right> + \max\limits_{u \in \Delta(I)}\min\limits_{v \in \Delta(J)}\left(\sum\limits_{ij}u_iv_j\big(\lambda a_{ij} + (1-\lambda)\left(\left<K(|q|+\lambda), q+ \lambda e_{ij}\right>+c(|q|+\lambda)\right)\big)\right).
\end{align*}
Hence,
\begin{align*}
\hspace{-7.5cm}\textbf{T}_{\lambda}(f)(q)=\left<\frac{\lambda H}{|q|}+(1-\lambda)K(|q|+\lambda),q \right> +
\end{align*}
\[\hspace{3cm}  +\max\limits_{u \in \Delta(I)}\min\limits_{v \in \Delta(J)} \left(\sum\limits_{ij}u_iv_j\left(\lambda a_{ij} + (1-\lambda)\left(\left<K(|q| + \lambda), \lambda e_{ij}\right>+c(|q|+\lambda)\right)\right)\right).\]
It is easy to see that there exist $K': \mathbb{R} \rightarrow \mathbb{R}^{I \times J}$ and $c':\mathbb{R} \rightarrow \mathbb{R}$ such that $\textbf{T}_{\lambda}(f)(q)=\left<K'(|q|),q \right>+c'(|q|)$, where 
\begin{align*}
\begin{cases}
K'(|q|)=\frac{\lambda H}{|q|}+(1-\lambda)K(|q|+\lambda) \in \mathcal{M}^{I \times J}\\
c'(|q|)= \max\limits_{u \in \Delta(I)}\min\limits_{v \in \Delta(J)} \bigg(\sum\limits_{ij}u_iv_j\left(\lambda a_{ij} + (1-\lambda)\left(\left<K(|q| + \lambda), \lambda e_{ij}\right>+c(|q|+\lambda)\right)\right)\bigg) \in \mathbb{R}
\end{cases}
\end{align*}
and it thus, clearly follows that $\textbf{T}_{\lambda}(f) \in \mathcal{L}_{\mathcal{B}}$. \\
For the rest of the proof, since $f \in \mathcal{L}_{\mathcal{B}}$, by equation (\ref{22}) for any $q \in \mathcal{Q}$,
\begin{align*}
\textbf{T}_{\lambda}(f)(q)&=\lambda\hspace{0.1cm} h(q) + \max\limits_{u \in \Delta(I)}\min\limits_{v \in \Delta(J)} \left(\lambda \sum\limits_{ij}u_iv_j  a_{ij} + (1-\lambda)f(q+ \lambda \sum\limits_{ij}u_iv_j  e_{ij})\big)\right)\\
&=\Theta_{\lambda}(f)(q),
\end{align*}
which concludes the proof.
\end{proof}

\

\begin{con*} \label{coincidisc}
Fix $\lambda \in (0,1]$. Then, for any $q \in \mathcal{Q}$, we have $\Psi_{\lambda}(q)=W_{\mathcal{P}_{\lambda}}(q)$.
\end{con*}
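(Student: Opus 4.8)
The plan is to combine Proposition~\ref{linop} with the Banach fixed point theorem. For $\lambda\in(0,1)$ both operators $\textbf{T}_\lambda$ (defined in~(\ref{2})) and $\Theta_\lambda$ (defined in~(\ref{22})) are strict contractions of modulus $1-\lambda$ on the Banach space $(\mathcal{F}_{\mathcal{B}},\|\cdot\|_\infty)$: the $\max\min$ operator is nonexpansive for the sup norm and the only $f$-dependent term in each operator carries the factor $1-\lambda$. Hence, as already recorded in the text, $\textbf{T}_\lambda$ has $\Psi_\lambda$ as its unique fixed point and $\Theta_\lambda$ has $W_{\mathcal{P}_\lambda}(\cdot)=W_{\mathcal{P}_\lambda}(0,\cdot)$ as its unique fixed point, and both are obtained as the uniform limits of the sequences of iterates started at the zero function.

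The heart of the argument is a single induction on $k\in\mathbb{N}$ establishing simultaneously that $\textbf{T}_\lambda^k(0)=\Theta_\lambda^k(0)$ and that this common function lies in $\mathcal{L}_{\mathcal{B}}$. For $k=0$ both iterates are the zero function, which belongs to $\mathcal{L}_{\mathcal{B}}$ (take $K\equiv 0$, $c\equiv 0$). For the inductive step, set $f_k:=\textbf{T}_\lambda^k(0)=\Theta_\lambda^k(0)\in\mathcal{L}_{\mathcal{B}}$; Proposition~\ref{linop} applied to $f_k$ gives at once $\textbf{T}_\lambda(f_k)\in\mathcal{L}_{\mathcal{B}}$ and $\textbf{T}_\lambda(f_k)=\Theta_\lambda(f_k)$, i.e.\ $\textbf{T}_\lambda^{k+1}(0)=\Theta_\lambda^{k+1}(0)\in\mathcal{L}_{\mathcal{B}}$, which closes the induction. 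Passing to the limit and using the uniform convergence of the iterates, $\Psi_\lambda=\lim_k\textbf{T}_\lambda^k(0)=\lim_k\Theta_\lambda^k(0)=W_{\mathcal{P}_\lambda}$, which proves the claim for $\lambda\in(0,1)$. The case $\lambda=1$ is handled separately and trivially: both $\textbf{T}_1$ and $\Theta_1$ reduce to the constant map sending any $f$ to $q\mapsto h(q)+\textbf{val}(A)$, so $\Psi_1=\textbf{T}_1(0)=\Theta_1(0)=W_{\mathcal{P}_1}$ (equivalently, $\Psi_1(q)=\textbf{V}_1(q)=h(q)+\textbf{val}(A)$).

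The only point requiring care is that the induction must carry the two properties --- equality of the iterates and membership in $\mathcal{L}_{\mathcal{B}}$ --- at the same time, since Proposition~\ref{linop} may be invoked only once the current iterate is already known to be affine on each slice $\{\,q:|q|=\mathrm{const}\,\}$; no genuine difficulty lies beyond this bookkeeping. An alternative, slightly shorter route would be to establish directly, via the discounted analogue of Proposition~\ref{pro affineproperty}, that $\Psi_\lambda\in\mathcal{L}_{\mathcal{B}}$; then Proposition~\ref{linop} gives $\Psi_\lambda=\textbf{T}_\lambda(\Psi_\lambda)=\Theta_\lambda(\Psi_\lambda)$, so $\Psi_\lambda$ is a fixed point of $\Theta_\lambda$ and hence equals $W_{\mathcal{P}_\lambda}$ by uniqueness of the fixed point.
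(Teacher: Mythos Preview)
Your proof is correct and follows essentially the same route as the paper: start from $0\in\mathcal{L}_{\mathcal{B}}$, use Proposition~\ref{linop} inductively to obtain $\textbf{T}_\lambda^m(0)=\Theta_\lambda^m(0)$ for all $m$, and pass to the limit via the contraction property to identify the fixed points $\Psi_\lambda$ and $W_{\mathcal{P}_\lambda}$. You are slightly more careful than the paper in two respects --- you make explicit the joint induction on equality and membership in $\mathcal{L}_{\mathcal{B}}$, and you treat the boundary case $\lambda=1$ separately (where the contraction argument does not apply) --- but the underlying argument is the same.
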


\begin{proof}
By Proposition \ref{linop}, since $f\equiv0 \in \mathcal{L}_{\mathcal{B}}$, we get that for any $m \in \mathbb{N}^*$, $\textbf{T}^m(0)=\Theta^m(0)$. It is easy to see that $\textbf{T}_{\lambda}$ and $\Theta_{\lambda}$ are contracting operators and it thus follows $\lim_{m \rightarrow +\infty} \textbf{T}^m_{\lambda}(0)=\lim_{m \rightarrow +\infty} \Theta^m_{\lambda}(0)$. In view of equations (\ref{22}) and (\ref{2}), we conclude that $\Psi_{\lambda}=W_{\mathcal{P}_{\lambda}}$.
\end{proof}

This result will allow us to use approximation schemes for differential games in the subsequent parts of the proof. Since the value of the original game is equal to that of the discretized approximated game, proving convergence of the value of the latter will prove the convergence of the value of the former. One difficulty arises however from the irregularity of the differential game at the origin.

\

\section{Existence of the limit value in $\Gamma_N(z)$} \label{section 5a}

\hspace{0.4cm}In this section, we prove the main result of the paper. We show that the asymptotic values of the $N$-stage and  the $\lambda$-discounted games exist and they are independent of the initial state $z$. We further show that $\lim_{N \rightarrow +\infty}\textbf{V}_N(z) =\lim_{\lambda \rightarrow 0}\textbf{V}_{\lambda}(z)=W(0,0)$. For this purpose we first provide some useful lemmas on the value of the original game $\textbf{V}_N$ (resp. $\textbf{V}_{\lambda}$) and the associated function $\Psi_N$ (resp. $\Psi_{\lambda}$).

\

\begin{lem*} \label{helplemma}
Let $\omega=(i_t, j_t)_{t\in \mathbb{N}^*}$ be a play and $(z_t)_{t \in \mathbb{N}}$ be the process in $\mathcal{Z}$, induced by the initial position $z_0=0$ and the play $\omega$. Then, for any $z \in \mathcal{Z}^*$, we have:
\begin{itemize}
\item for any $N \in \mathbb{N}^*$,
\begin{align*}
\left|\gamma_N(z,\omega)-\gamma_N(0,\omega)\right|   \leq \frac{2\left\|H\right\|_{\infty} |z|}{N}\left( \ln\left(\frac{|z|+N}{|z|}\right) + C \right)
\end{align*}
where $C=2\sup_{N \in \mathbb{N}^*}\varepsilon(N)$ and $\varepsilon(N)$ is a function which goes to zero when $N$ tends to infinity.
\item for any $\lambda \in (0,1]$, such that $\lambda z <1$, there exists $N \in \mathbb{N}^*$, i.e., $N+|z|=\lfloor{\frac{1}{\lambda}}\rfloor$, such that we have:
\begin{align*}
\left|\gamma_{\lambda}(z,\omega)-\gamma_{\lambda}(0,\omega)\right|  \leq  2\left\| H \right\|_{\infty} \lambda |z|  \left(\ln\left(\frac{1}{\lambda |z|}\right) + C_{\lambda}^z \right),
\end{align*}
where $C^z_{\lambda}=(1-\lambda)^{\lfloor{\frac{1}{\lambda}}\rfloor - |z|}+C$. 
\end{itemize}

\begin{proof}
Fix $z \in \mathcal{Z}$ and $N \in \mathbb{N}^*$. Then, we have:
\begin{align*}
\left|\sum\limits_{t=1}^N h(z_t)-h(z+z_t)\right|  &= \bigg|\left\langle H, \frac{z}{|z|} \right\rangle+\sum\limits_{t=2}^{N}\left\langle H,\frac{z_t}{t}-\frac{z+z_t}{|z|+t}\right\rangle  \bigg|\\
&= \bigg|\left\langle H, \frac{z}{|z|} \right\rangle+\sum\limits_{t=2}^{N}\left\langle H,\frac{z_t(\left|z\right|+t)-t(z+z_t)}{t(\left|z\right|+t)}\right\rangle  \bigg|\\
&\leq \left|\left\langle H, \frac{z}{|z|} \right\rangle\right|+\sum\limits_{t=2}^{N}\left|\left\langle H,\frac{z_t\left|z\right|-tz}{t(\left|z\right|+t)}\right\rangle  \right| 
\end{align*}
It follows,
\begin{align*}
\left|\sum\limits_{t=1}^N h(z_t)-h(z+z_t)\right|  &\leq \left\|H\right\|_{\infty}\bigg(1+\sum\limits_{t=2}^{N}\frac{\left|z_t\right|\left|z\right|+t\left|z\right|}{t(\left|z\right|+t)}\bigg)\\
&\leq \left\|H\right\|_{\infty}\bigg(2|z|\sum\limits_{t=1}^{N}\frac{1}{\left|z\right|+t}\bigg)\\
& \leq 2\left\|H\right\|_{\infty} |z|\left( \ln\left(\frac{|z|+N}{|z|}\right) + |\varepsilon(|z|+N)| + |\varepsilon(|z|)| \right),
\end{align*}
where $\varepsilon(x)$ is a function that goes to 0 when $x$ tends to infinity. We put $C:=2 \sup_{N \in \mathbb{N}^*}\varepsilon(N)$ and we thus, conclude the proof of the assertion. To show the second assertion, in a similar way, we obtain:
\begin{align*}
\left| \gamma_{\lambda}(0)-\gamma_{\lambda}(z) \right| &\leq 2\lambda |z| \left\|H\right\|_{\infty}\sum\limits_{t=1}^{\infty}\frac{(1-\lambda)^{t-1}}{\left|z\right|+t}
\end{align*}
Define $N:=\inf\{k \in \mathbb{N}^* : k+|z|+1 > 1/\lambda\}$. Hence, we may write:
\begin{align*}
\left| \gamma_{\lambda}(0)-\gamma_{\lambda}(z) \right| &\leq 2\left\| H \right\|_{\infty} \lambda|z|\left(\sum\limits_{t=1}^{N}\frac{1}{|z|+k} +(1-\lambda)^N \sum\limits_{k=1}^{\infty}\lambda (1-\lambda)^{k} \right)\\
& \leq   2\left\| H \right\|_{\infty} \lambda |z|\left(\ln\left(\frac{|z|+N}{|z|}\right)+ |\varepsilon(|z|+N)| + |\varepsilon(|z|)|+ (1-\lambda)^N  \right)\\
& \leq  2\left\| H \right\|_{\infty} \lambda |z| \left(\ln\left(\frac{1}{\lambda |z|}\right) + C+(1-\lambda)^{\lfloor{\frac{1}{\lambda}}\rfloor - |z|} \right),
\end{align*}
which ends the proof of the Lemma.
\end{proof}
\end{lem*}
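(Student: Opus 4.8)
This is the repeated-game counterpart of Lemmas \ref{lem maj} and \ref{lem boundstate}, and I would prove it in the same spirit. The key point is the separability of the stage payoff: running the same play $\omega$ from $z_0=z$ and from $z_0=0$ produces the same action components $a_{i_tj_t}$, so the difference $\gamma_N(z,\omega)-\gamma_N(0,\omega)$ (and similarly $\gamma_\lambda(z,\omega)-\gamma_\lambda(0,\omega)$) collapses to a weighted average over stages of the differences of frequency terms alone: at stage $t$ the two games are in states $z+z_{t-1}$ and $z_{t-1}$, with $|z_{t-1}|=t-1$ known exactly, and the relevant quantity is $h(z+z_{t-1})-h(z_{t-1})$.

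First I would establish the pointwise bound $|h(z+z_{t-1})-h(z_{t-1})|\le 2\|H\|_\infty\,\frac{|z|}{|z|+t-1}$ for $t\ge 2$: write the difference as $\langle H,\frac{z+z_{t-1}}{|z|+t-1}-\frac{z_{t-1}}{t-1}\rangle$, put the fractions over the common denominator $(t-1)(|z|+t-1)$, observe that the numerator $(t-1)z-|z|z_{t-1}$ has $|\cdot|$-norm at most $(t-1)|z|+|z|\cdot|z_{t-1}|=2(t-1)|z|$ since $|z_{t-1}|=t-1$, and then invoke $|\langle H,v\rangle|\le\|H\|_\infty|v|$. The stage $t=1$ must be treated apart, since the frequency is not yet defined there; by the convention $h(0)=0$ it contributes simply $h(z)$, bounded by $\|H\|_\infty$, which (using $|z|\ge 1$) is dominated by the general bound.

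For the $N$-stage part I would sum these estimates over $t=1,\dots,N$ and divide by $N$, obtaining, after absorbing the $t=1$ term, a bound of the form $\frac{2\|H\|_\infty|z|}{N}\sum_{t=1}^{N}\frac1{|z|+t}$. Writing $\sum_{t=1}^{N}\frac1{|z|+t}=\sum_{k=1}^{|z|+N}\frac1k-\sum_{k=1}^{|z|}\frac1k$ and inserting the expansion $\sum_{k=1}^n\frac1k=\ln n+\gamma+\varepsilon(n)$ with $\varepsilon(n)\to 0$ gives $\ln\frac{|z|+N}{|z|}+\varepsilon(|z|+N)-\varepsilon(|z|)$; taking $C:=2\sup_{N\ge 1}|\varepsilon(N)|$ to absorb the two bounded remainders produces the stated inequality.

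For the $\lambda$-discounted part the stage weights become $\lambda(1-\lambda)^{t-1}$ in place of $1/N$, so the same pointwise bound gives an estimate of the form $2\|H\|_\infty\lambda|z|\sum_{t\ge 1}\frac{(1-\lambda)^{t-1}}{|z|+t}$ (the precise index shift contributing only a bounded correction), and the one delicate step is where to split the series. I would take $N$ with $|z|+N=\lfloor 1/\lambda\rfloor$ — the hypothesis $\lambda|z|<1$ being exactly what guarantees $N\ge 1$: on the head $t\le N$ bound $(1-\lambda)^{t-1}\le 1$ to recover $\sum_{t=1}^N\frac1{|z|+t}$, which is at most $\ln\frac1{\lambda|z|}$ up to the harmonic remainders since $|z|+N=\lfloor 1/\lambda\rfloor\le 1/\lambda$; on the tail $t>N$ the indices have passed $1/\lambda$, so $\frac1{|z|+t}<\lambda$, whence $\sum_{t>N}\frac{(1-\lambda)^{t-1}}{|z|+t}<\lambda\sum_{t>N}(1-\lambda)^{t-1}=(1-\lambda)^N=(1-\lambda)^{\lfloor 1/\lambda\rfloor-|z|}$. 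Collecting the harmonic remainders together with this tail term into $C_\lambda^z=(1-\lambda)^{\lfloor 1/\lambda\rfloor-|z|}+C$ finishes the proof. The main difficulty is purely bookkeeping: making sure every harmonic-number remainder is uniformly bounded so it can be folded into one constant, treating the stage with undefined frequency, and keeping the discounted cutoff $N$ compatible with $\lambda|z|<1$ — there is nothing conceptually hard beyond these.
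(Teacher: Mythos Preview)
Your proposal is correct and follows essentially the same approach as the paper: isolate the first stage, bound the remaining stage differences $|h(z+z_{t-1})-h(z_{t-1})|$ by $2\|H\|_\infty|z|/(|z|+t-1)$ via the common-denominator computation, sum to a partial harmonic series, and expand the latter as $\ln\frac{|z|+N}{|z|}$ plus bounded remainders absorbed into $C$; for the discounted case both you and the paper split the series at $N$ with $|z|+N=\lfloor 1/\lambda\rfloor$, drop $(1-\lambda)^{t-1}\le 1$ on the head, and use $1/(|z|+t)<\lambda$ on the tail. The only discrepancies are harmless off-by-one index shifts (the paper writes $z_t$ where it apparently means $z_{t-1}$) and the precise way the stage-one term is folded into the sum, neither of which affects the argument.
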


\

\begin{pro*} \label{lem same}
Let $z \in \mathcal{Z}$. If $\textbf{V}_N(z)$ converges to some $\ell \in \mathbb{R}$ (resp. $\textbf{V}_{\lambda}(z)$), then for any $\tilde{z} \in \mathcal{Z}$, $\textbf{V}_N(\tilde{z})$ (resp. $\textbf{V}_{\lambda}(\tilde{z})$) converges to the same limit $\ell$.
\end{pro*}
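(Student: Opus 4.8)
The plan is to reduce everything to comparison with the fixed initial state $0$: I will show that $|\mathbf{V}_N(z)-\mathbf{V}_N(0)|\to 0$ as $N\to\infty$ for every $z\in\mathcal{Z}$ (and symmetrically $|\mathbf{V}_\lambda(z)-\mathbf{V}_\lambda(0)|\to 0$ as $\lambda\to 0$). The statement of the proposition then follows immediately from the triangle inequality, since $|\mathbf{V}_N(z)-\mathbf{V}_N(\tilde z)|\le |\mathbf{V}_N(z)-\mathbf{V}_N(0)|+|\mathbf{V}_N(0)-\mathbf{V}_N(\tilde z)|$, and likewise in the discounted case; if $\mathbf{V}_N(z)\to\ell$ then both terms on the right tend to $0$, so $\mathbf{V}_N(\tilde z)\to\ell$.

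The first ingredient is an identification of strategies that does not depend on the initial state. Because transitions are deterministic and monitoring is perfect, the state reached at stage $t$ is a deterministic function of the initial state and of the action history $(i_s,j_s)_{s<t}$; hence a behavioral strategy may be described, without loss of generality, as a family of maps from $(I\times J)^{t-1}$ into $\Delta(I)$ (resp. $\Delta(J)$), independently of the initial state. This gives canonical bijections of $\Sigma$ and of $T$ under which a pair $(\sigma,\tau)$ and its image $(\sigma',\tau')$ induce the same probability distribution on the set of plays $\omega=(i_t,j_t)_t$, so that $\mathbb{E}^z_{\sigma,\tau}[f(\omega)]=\mathbb{E}^0_{\sigma',\tau'}[f(\omega)]$ for every measurable $f$ of the play. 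The second ingredient is Lemma \ref{helplemma}: in the $N$-stage case it yields $|\gamma_N(z,\omega)-\gamma_N(0,\omega)|\le \delta_N(z)$ with $\delta_N(z):=\tfrac{2\|H\|_\infty|z|}{N}\bigl(\ln\tfrac{|z|+N}{|z|}+C\bigr)$, a bound uniform over all plays $\omega$ with $\delta_N(z)\to 0$ as $N\to\infty$ (the logarithm grows like $\ln N$, which is killed by the $1/N$ factor). Consequently, for every strategy pair, $|\gamma_N(z,\sigma,\tau)-\gamma_N(0,\sigma',\tau')|=\bigl|\mathbb{E}_{\sigma,\tau}[\gamma_N(z,\omega)-\gamma_N(0,\omega)]\bigr|\le\delta_N(z)$. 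Passing to $\sup_\sigma\inf_\tau$ through the bijections (the extrema range over the same sets) gives $|\mathbf{V}_N(z)-\mathbf{V}_N(0)|\le\delta_N(z)$, which tends to $0$; this proves the $N$-stage part. The $\lambda$-discounted part is identical in structure, using the second bound of Lemma \ref{helplemma}: for $\lambda$ small enough that $\lambda|z|<1$ one has $|\gamma_\lambda(z,\omega)-\gamma_\lambda(0,\omega)|\le\delta_\lambda(z):=2\|H\|_\infty\lambda|z|\bigl(\ln\tfrac{1}{\lambda|z|}+C^z_\lambda\bigr)$, and $\delta_\lambda(z)\to 0$ as $\lambda\to 0$ since $\lambda\ln(1/\lambda)\to 0$ and $C^z_\lambda=(1-\lambda)^{\lfloor 1/\lambda\rfloor-|z|}+C$ stays bounded; the same sup–inf comparison yields $|\mathbf{V}_\lambda(z)-\mathbf{V}_\lambda(0)|\le\delta_\lambda(z)\to 0$.

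I expect the only genuinely delicate point to be the strategy identification together with the observation that the bound of Lemma \ref{helplemma} is uniform in the play $\omega$, so that it survives both the expectation over plays and the $\sup$–$\inf$ over strategies; everything else is the standard fact that uniformly close payoff functions produce uniformly close values.
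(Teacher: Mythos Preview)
Your proposal is correct and follows essentially the same approach as the paper's proof: both reduce to showing $|\mathbf{V}_N(z)-\mathbf{V}_N(0)|\to 0$ (and the discounted analogue) by applying the uniform-in-$\omega$ bound of Lemma~\ref{helplemma}, taking expectations, and using that uniformly close payoff functions yield uniformly close values. You are in fact more explicit than the paper about the strategy identification (that behavioral strategies can be indexed by action histories alone, so the same $(\sigma,\tau)$ induces the same law on plays from any initial state), a point the paper leaves implicit when it writes $\gamma_N(0,\sigma,\tau)$ and $\gamma_N(z,\sigma,\tau)$ with the same $(\sigma,\tau)$.
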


\begin{proof}
Given $N \in \mathbb{N}^*$ ($\lambda \in (0,1)$), fix $z \in \mathcal{Z}^*$ and let us consider the games $\Gamma_N(z)$ (resp. $\Gamma_{\lambda}(z)$) and $\Gamma_N(0)$ (resp. $\Gamma_{\lambda}(0)$). For a pair of behavioral strategies $(\sigma,\tau)$ we denote by  $\mathbb{P}_{\sigma,\tau}$ the probability induced on $(I\times J)^{N}$, (resp. ($I \times J)^{\infty}$). With respect to the probability  $\mathbb{P}_{\sigma,\tau}$, by Lemma \ref{helplemma}, we get:
\begin{align*}
\big|\gamma_N(0,\sigma,\tau)-\gamma_N(z,\sigma,\tau)\big| \le
\frac{1}{N}\left(2\left\|H\right\|_{\infty}|z|\left(\ln\left(\frac{|z|+N}{|z|}\right) + C\right)\right).
 \end{align*}
Since the right hand term is independent of $(\sigma,\tau)$ and the sup-norm of the value function is less than or equal to sup-norm of the payoff function, we get:
\begin{align*}
\big|\textbf{V}_N(z)- \textbf{V}_N(0)\big| \le
\frac{1}{N}\left(2\left\|H\right\|_{\infty}|z|\left(\ln\left(\frac{|z|+N}{|z|}\right) + C\right)\right).
\end{align*}
The conclusion follows by remarking that the right hand side goes to zero when $N \rightarrow \infty$.
\\
For the rest of the proof, fix $\lambda \in (0,1)$. Likewise, by Lemma \ref{helplemma}, we get:
\begin{align*}
\left| \gamma_{\lambda}(0,\sigma,\tau)-\gamma_{\lambda}(z,\sigma,\tau) \right| \leq 2\left\| H \right\|_{\infty} \lambda |z| \left(\ln\left(\frac{1}{\lambda |z|}\right) + C_{\lambda}^z \right).
\end{align*}
Likewise, we obtain:
\begin{align*}
\left|\textbf{V}_{\lambda}(z) - \textbf{V}_{\lambda}(0)\right| \leq 2\left\| H \right\|_{\infty} \lambda |z| \left(\ln\left(\frac{1}{\lambda |z|}\right) + C_{\lambda}^z \right).
\end{align*}
Note that $C_{\lambda}^z$ converges to $1/e$, as $\lambda$ tends to $0$. The result follows since the right hand side goes to zero when $\lambda$ tends to zero.
\end{proof}

\

\

Next theorem provides the main result of the paper. Given $z \in \mathcal{Z}$, we first show that $\textbf{V}_N(z)$ converges to $W(0,0)$ when $N$ tends to infinity and we then prove that $\textbf{V}_{\lambda}(z)$ converges to the same limit, when $\lambda$ goes to 0.

\

\begin{thm*}
For any $z \in \mathcal{Z}$, $\lim\limits_{N \rightarrow +\infty} \textbf{V}_N(z) = \lim\limits_{\lambda \rightarrow 0} \textbf{V}_{\lambda}(z)= W(0,0)$.
\end{thm*}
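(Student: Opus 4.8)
The plan is to establish both limits by comparing the repeated game to the discretized differential game on the relevant subdivision, and then invoking the continuity result at the origin (Theorem \ref{exlimit}). By Proposition \ref{lem same} it suffices to prove the statement for $z=0$. First I would handle the $N$-stage game: by definition (\ref{e13}), $\Psi_N(0,0)=\textbf{V}_N(0)$, and by Proposition \ref{pro coinc} we have $\Psi_N(0,0)=W_{\mathcal{P}_N}(0,0)$, so $\textbf{V}_N(0)=W_{\mathcal{P}_N}(0,0)$. Hence it remains to show $W_{\mathcal{P}_N}(0,0)\to W(0,0)$ as $N\to\infty$, i.e. that the uniform discretization of the differential game converges to its value at the origin.

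The heart of the argument is therefore the convergence of the discretized values to $W$. Away from the origin, one has the standard machinery: the dynamics and running payoff are bounded, Lipschitz in $q$, continuous, and the Isaacs condition holds, so by \cite{souganidis1999two} the discrete values $W_{\mathcal{P}}(t,q)$ converge to $W(t,q)$ as $|\mathcal{P}|\to 0$, uniformly on compact subsets of $[0,1]\times\mathcal{Q}^*$. The obstruction is exactly that the point $(0,0)$ lies on the singular locus, so this convergence cannot be applied there directly. To get around this I would use the same $\varepsilon/2$-versus-$\varepsilon$ strategy-transfer trick as in the proof of Theorem \ref{exlimit}, now applied uniformly in the mesh. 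Concretely: fix $\varepsilon>0$; choose $\eta\in(0,\tfrac14)$ small (as in Lemma \ref{lem boundstate}) so that $|G(0,q,\tilde u,\tilde v)-G(0,0,\tilde u,\tilde v)|<\varepsilon/4$ whenever $|q|=\eta$, uniformly over all control pairs; this bound transfers verbatim to the discretized payoff $G_{\mathcal{P}}$ since the estimate in Lemma \ref{lem maj} depends only on the trajectories and $|q(s)|=|q|+s$ holds in the discrete game as well. Then an $\varepsilon/4$-optimal strategy for the discretized game $\mathcal{G}_{\mathcal{P}}(0,q)$ (whose value is $W_{\mathcal{P}}(0,q)$, which for $|q|=\eta$ fixed converges to $W(0,q)$ by the non-singular convergence result) is, by the Lemma \ref{lem boundstate}-type transfer, $\varepsilon/2$-optimal in $\mathcal{G}_{\mathcal{P}}(0,0)$. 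Passing to the limit in the mesh and then letting $q\to 0$ along $|q|=\eta\to 0$, together with $W(0,q)\to W(0,0)$ from Theorem \ref{exlimit}, yields $W_{\mathcal{P}_N}(0,0)\to W(0,0)$. An alternative, more hands-on route is to feed the quantitative estimate of Lemma \ref{helplemma} (with $z=0$, comparing $\gamma_N$ started at $0$ to the continuous payoff) directly into the recursive formula; but the strategy-transfer argument seems cleaner.

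For the $\lambda$-discounted game the scheme is parallel but uses the other coincidence result. By (\ref{defPsilambda}), $\textbf{V}_\lambda(0)=\Psi_\lambda(0)$, and by Corollary \ref{coincidisc}, $\Psi_\lambda(0)=W_{\mathcal{P}_\lambda}(0)=W_{\mathcal{P}_\lambda}(0,0)$, so $\textbf{V}_\lambda(0)=W_{\mathcal{P}_\lambda}(0,0)$, the value at the origin of the differential game discretized along the subdivision $\mathcal{P}_\lambda$. Since the mesh $|\mathcal{P}_\lambda|=\lambda\to 0$ as $\lambda\to 0$, the very same argument — non-singular convergence of $W_{\mathcal{P}_\lambda}(0,q)$ to $W(0,q)$ for fixed $|q|=\eta$, combined with the uniform-in-mesh strategy transfer of Lemmas \ref{lem maj}--\ref{lem boundstate} and the limit $W(0,q)\to W(0,0)$ — gives $W_{\mathcal{P}_\lambda}(0,0)\to W(0,0)$. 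Therefore $\lim_N \textbf{V}_N(z)=\lim_\lambda\textbf{V}_\lambda(z)=W(0,0)$ for every $z$. The main obstacle throughout is the singularity of $h$ at the origin, which prevents a direct appeal to the standard convergence theorems for discretizations of differential games; the resolution is that the singularity is integrable (the $|q|\ln|q|$ bound of Lemma \ref{lem boundstate}), so approximating $(0,0)$ by nearby regular points $(0,q)$ and transferring near-optimal strategies controls the error uniformly in the discretization parameter.
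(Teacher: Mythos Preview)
Your proposal is correct and follows essentially the same three-term decomposition as the paper: reduce to $z=0$, then bound
\[
\big|\Psi_N(0,0)-W(0,0)\big|\le \big|\Psi_N(0,0)-\Psi_N(0,q_0)\big|+\big|\Psi_N(0,q_0)-W(0,q_0)\big|+\big|W(0,q_0)-W(0,0)\big|,
\]
using Souganidis' convergence for the middle term at a regular point $q_0$ with $|q_0|=\eta$, and Theorem~\ref{exlimit} for the last term; the $\lambda$-case is handled identically via Corollary~\ref{coincidisc}.

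The only genuine difference is in the first term. The paper controls $|\Psi_N(0,q_0)-\Psi_N(0,0)|$ by going back to the \emph{repeated} game and invoking Lemma~\ref{helplemma} on $|\textbf{V}_N(z_0)-\textbf{V}_N(0)|$ with $z_0=\lfloor Nq_0\rfloor$, obtaining the $\eta|\ln\eta|$ bound directly from the random play. You instead stay inside the discretized differential game and transfer the estimate of Lemmas~\ref{lem maj}--\ref{lem boundstate} to $G_{\mathcal{P}}$, using that $|q_k|=|q_0|+t_k$ and $q_k-\tilde q_k\equiv q$ hold along discrete trajectories just as in continuous time; the resulting Riemann-type sum $\sum_k \pi_k/(|q|+t_k)$ again yields an $\eta|\ln\eta|$ bound uniformly in the mesh (for $|\mathcal{P}|\le\eta$ in the $\lambda$-case). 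Both routes deliver the same uniform-in-$N$ (resp.\ uniform-in-$\lambda$) control, and indeed you explicitly mention the paper's route via Lemma~\ref{helplemma} as your ``more hands-on'' alternative. Your version is slightly cleaner conceptually since it never leaves the differential-game framework; the paper's version has the advantage that Lemma~\ref{helplemma} is already proved in the needed form.
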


\begin{proof}
We first prove that for any $z \in \mathcal{Z}$, $\lim_{N \rightarrow +\infty}\textbf{V}_N(z)=W(0,0)$. To that purpose, fix $\varepsilon >0$, choose $\eta \in (0,\frac{1}{4})$ such that,
\begin{align*}
\begin{cases}
\eta < \frac{\varepsilon}{12(\left\|H\right\|_{\infty}(\ln(2)+C))}\\
\eta \ln(\eta) <\frac{\varepsilon}{12\left\|H\right\|_{\infty}+1}
\end{cases}
\end{align*}
and in view of Theorem \ref{exlimit}, we also require $\eta$ to be such that for all $q\in \mathcal{Q}^*$ with $\left|q\right|\le\eta$, $\left|W\left(0,q\right)- W(0,0) \right| < \frac{\varepsilon}{3}$.
Choose some $q_0$ such that $|q_0| = \eta$.
Assumptions on the strategy sets, the dynamics and running payoff functions of Theorem 4.4 in \cite{souganidis1999two} are established in $\mathcal{G}\big(0,q_0\big)$. Accordingly, there exists $\delta >0$, such that for all $|\mathcal{P}| < \delta$, the value $W_{\mathcal{P}}$ converges uniformly on every compact set of $\mathcal{Q}$ to $W$, as the mesh of the discretization $|\mathcal{P}|$ tends to $0$. Fix $N_0=\lfloor\frac{1}{\delta}\rfloor+1$ and associate to $\mathcal{G}\big(0,q_0\big)$, for all $N \geq N_0$, a discrete time game adapted to the subdivision $\mathcal{P}_N$, denoted by $\mathcal{G}_{\mathcal{P}_N}\big(0,q_0\big)$. Then, $\big| W_{\mathcal{P}_N}\left(0,q_0\right) - W\left(0,q_0\right)\big| < \frac{\varepsilon}{3}$.
From Proposition \ref{pro coinc}, $W_{\mathcal{P}_N}\left(0,q_0\right)=\Psi_N(0,q_0)$. By Lemma \ref{helplemma}, for any $z \in \mathcal{Z}$, we have:
\begin{align*}
\left|\textbf{V}_N(z)-\textbf{V}_N(0)\right| \le \frac{2\left\|H\right\|_{\infty}}{N}\left(|z|\left(\ln\left(\frac{|z|+N}{|z|}\right)+C\right)\right).
\end{align*}
There exists $z_0 \in \mathcal{Z}$, such that $z_0=\lfloor Nq_0 \rfloor$ and since $|q_0|=\eta$, we have $|z_0|=N \eta -\rho$ for some $\rho \in (0,1)$. By definition $\Psi_N(0,q_0)=\textbf{V}_N(\lfloor Nq_0 \rfloor)$. Hence, 
\begin{align*}
\big|\Psi_N(0,q_0) - \Psi_N\left(0,0\right)\big| \leq 2\left\|H\right\|_{\infty}\left(\eta-\frac{\rho}{N}\right)\left|\ln\left(\frac{\eta+1-\frac{\rho}{N}}{\eta-\frac{\rho}{N}}\right)+C\right|.
\end{align*}
Since $\eta < \frac{1}{4}$, we have $\ln(1+\eta-\rho/N) < \ln(2)$ and $(\eta-\rho/N)|\ln(\eta - \rho/N)| < \eta|\ln(\eta)|$. As a consequence, we get:
\begin{align*}
\big|\Psi_N(0,q_0) - \Psi_N\left(0,0\right)\big| \leq 2\eta\left\|H\right\|_{\infty}\left(\ln(2)+C+|\ln(\eta)|\right)<\frac{\varepsilon}{3}.
\end{align*}
Therefore, for every integer $N \geq N_0$,
\begin{align*}
\big|\Psi_N\left(0,0\right)-W(0,0)\big| &\leq \big|\Psi_N\left(0,0\right)-\Psi_N\left(0,q_0\right)\big|+\big|\Psi_N\left(0,q_0\right)-W\left(0,q_0\right)\big|+\big|W(0,q_0)-W(0,0)\big|\\
& < \varepsilon.
\end{align*}
From (\ref{e13}), $\Psi_N\left(0,0\right)=\textbf{V}_{N}(0)$. It  follows that $\textbf{V}_N(0) \to W(0,0)$ when $N \to \infty$. In view of Lemma \ref{lem same}, we conclude that for any $z \in \mathcal{Z}$, $\textbf{V}_N(z)$ converges to $W(0,0)$ as $N$ tends to infinity.

\

\pg To prove the assertion on the discounted value, likewise we fix $\varepsilon >0$ and choose $\eta > 0$ such that we have:
\begin{align*}
\begin{cases}
\eta < \frac{\varepsilon}{12(\left\|H\right\|_{\infty}+C_{\lambda}^z)}\\
\eta \ln(\eta) <\frac{\varepsilon}{12 \left\| H \right\|_{\infty}+1}
\end{cases}
\end{align*} 
In view of Theorem \ref{exlimit}, we also require $\eta$ to be such that for all $q\in \mathcal{Q}^*$ with $\left|q\right|\le\eta$, $\left|W\left(0,q\right)- W(0,0) \right| < \frac{\varepsilon}{3}$. Following similar arguments to the ones of the first part of the proof, 
 fix $\lambda_0:=\delta$ and associate to $\mathcal{G}\big(0,q_0\big)$, for all $\lambda \leq \min(\lambda_0,\eta)$, a discrete time game adapted to the subdivision $\mathcal{P}_{\lambda}$, denoted by $\mathcal{G}_{\mathcal{P}_{\lambda}}\big(0,q_0\big)$. Then, $\left| W_{\mathcal{P}_{\lambda}}\left(q_0\right) - W\left(0,q_0\right)\right| < \frac{\varepsilon}{3}$. By Corollary \ref{coincidisc}, $W_{\mathcal{P}_{\lambda}}\left(q_0\right)=\Psi_{\lambda}(q_0)$. By Lemma \ref{helplemma}, for any $z \in \mathcal{Z}$, we have:
\begin{align*}
\left|\textbf{V}_{\lambda}(z) - \textbf{V}_{\lambda}(0)\right| \leq 2\left\| H \right\|_{\infty} \lambda |z| \left(\ln\left(\frac{1}{\lambda |z|}\right) + C_{\lambda}^z \right).
\end{align*}
There exists $z_0 \in \mathcal{Z}$, such that $z_0=\lfloor \frac{q_0}{\lambda}\rfloor$ and since $|q_0|=\eta$, we have: $|z_0|=(\eta / \lambda)-\rho$ for some $\rho \in (0,1)$. By definition $\Psi_{\lambda}(q_0)=\textbf{V}_{\lambda}\left(\lfloor \frac{q_0}{\lambda} \rfloor\right)$. It follows:
\begin{align*}
\left| \Psi_{\lambda}(q_0) - \Psi_{\lambda}(0) \right| 
&\leq \left\| H \right\|_{\infty}  2 (\eta-\lambda \rho) \left|\ln\left(\frac{1}{\eta-\lambda \rho}\right) +C_{\lambda}^z \right|.
\end{align*}
Since $\eta \in (0,\frac{1}{4})$, we have $(\eta - \lambda \rho) |\ln(\eta - \lambda \rho)| < \eta |\ln(\eta)|$. It then follows:
\begin{align*}
\left| \Psi_{\lambda}(q_0) - \Psi_{\lambda}(0) \right| 
&\leq 2\left\| H \right\|_{\infty}  \eta (|\ln(\eta)| +C_{\lambda}^z) < \frac{\varepsilon}{3}.
\end{align*}
Therefore, for any $\lambda \leq \min\{\lambda_0,\eta\}$,
\begin{align*}
\big|\Psi_{\lambda}\left(0\right)-W(0,0)\big| &\leq \big|\Psi_{\lambda}\left(0\right)-\Psi_{\lambda}\left(q_0\right)\big|+\big|\Psi_{\lambda}\left(q_0\right)-W\left(0,q_0\right)\big|+\big|W(0,q_0)-W(0,0)\big|\\
&< \varepsilon.
\end{align*}
From (\ref{defPsilambda}), $\Psi_{\lambda}\left(0\right)=\textbf{V}_{\lambda}(0)$. It follows that $\textbf{V}_{\lambda}(0) \to W(0,0)$ when $\lambda \to 0$. By Lemma \ref{helplemma}, we conclude that for any $z \in \mathcal{Z}$, $\textbf{V}_{\lambda}(z)$ converges to $W(0,0)$ as $\lambda$ tends to zero, which completes the proof of the Theorem.
\end{proof}

\

\section{Conclusion and perspectives} \label{section 6}

\hspace{0.4cm}In this paper we have studied two-player zero-sum frequency-dependent games with separable stage-payoffs and established the convergence of $\textbf{V}_n$ and $\textbf{V}_{\lambda}$ as $n$ tends to infinity and $\lambda$ goes to $0$ respectively, to the value of the associated differential game starting at the origin, $W(0,0)$. 
A natural generalization of our existence result concerns a stage payoff function $g(z,i,j)$ which is assumed to be linear  in $z$ and such that the impacts of the past and that of present actions are not separable, but combine together in some way. Lastly, let us mention that since existence of the asymptotic value in the zero-sum case is established, a study of limits of Nash equilibria payoffs in general-sum frequency-dependent games that leads to some Folk-Theorem-like now seems to be possible. In doing so, one may compare the asymptotic results with the ones obtained for infinite games by \cite{joosten2003games}.


\

\bibliographystyle{plainnat}
\bibliography{biblio}

\end{document}